\newtheorem{thm}{Theorem}[section]
\newtheorem{lem}[thm]{Lemma}
\newtheorem{lem-defin}[thm]{Lemma-Definition}
\newtheorem{prop}[thm]{Proposition}
\newtheorem{cor}[thm]{Corollary}
\newtheorem{theoremalph}{Theorem}
\newtheorem*{corC}{Corollary C}
\newtheorem*{corB}{Corollary B}
\newtheorem*{Question}{Question}
\theoremstyle{definition}
\newtheorem{rem}[thm]{Remark}
\newtheorem{defin}[thm]{Definition}
\newtheorem{notation}[thm]{Notation}
\newcommand{\Z}{\mathbb{Z}}
\def\paragraph{\@startsection{paragraph}{4}%
  \z@\z@{-\fontdimen2\font}%
  {\normalfont\bfseries}}
\begin{document}

\title[Characterising large-type Artin groups]{Characterising large-type Artin groups}

\author{Alexandre Martin and Nicolas Vaskou}

\maketitle

\begin{abstract}
We show that the class of large-type Artin groups is invariant under isomorphism, in stark contrast with the corresponding situation for Coxeter groups. We obtain this result by providing a purely algebraic  
characterisation of large-type Artin groups (i.e. independent of the presentation graph). 

As a corollary, we completely describe the Artin groups isomorphic to a given large-type Artin group, and characterise those large-type Artin groups that are rigid.
\end{abstract}

\section{Introduction}

Artin groups and Coxeter groups are two actively studied families of groups that are defined by means of a presentation graph, i.e. a finite simplicial graph with edges  labelled by integers greater than~$1$. While the structure and geometry of Coxeter groups is by now well understood, many basic open problems remain unsolved for general Artin groups (see \cite{C-ProblemsArtin} for a survey). As a result, several classes of Artin groups have been introduced, where the additional conditions allow for new tools that can be used to study these groups. These additional conditions generally involve either restrictions on the presentation graph (as with right-angled, irreducible, large-type Artin groups), or properties of the associated Coxeter group (as with spherical-type, hyperbolic-type, affine-type Artin groups). One thing to emphasise however is that these classes all involve a choice of a presentation graph, and it could a priori be possible for an Artin group of a given class to be isomorphic to an Artin group outwith that class. It is thus natural to ask whether any of the classes involve properties of the group itself rather than a specific presentation:

\begin{Question}
Which classes of Artin groups are invariant under isomorphism?
\end{Question}

 This article focuses on the class of \textbf{large-type} Artin groups, that is, where all the labels of the presentation graph are at least $3$. 
 It is already known that the class of large-type Coxeter groups is not invariant under isomorphism. Indeed, the dihedral group $D_{6}$ of order $12$ (with presentation graph an edge labelled $6$) is isomorphic to the direct product $D_3\times \Z_2$ (with presentation graph a triangle with labels $(2, 2, 3)$). By contrast, our main result is that, for Artin groups, being of large type is indeed a property of the group, and not of the chosen presentation graph:

\begin{theoremalph}
The class of large-type Artin groups is invariant under isomorphism.	
 \end{theoremalph}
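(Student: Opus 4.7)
The plan is to isolate a purely algebraic property $P(G)$, definable intrinsically on an abstract group $G$ (with no reference to any presentation graph), such that for any Artin group $A_\Gamma$ one has $P(A_\Gamma)$ if and only if $\Gamma$ is of large type. Granting such a characterization, the theorem is immediate: if $A_\Gamma \cong A_{\Gamma'}$ and $\Gamma$ is of large type, then $P$ passes across the isomorphism, forcing $\Gamma'$ to be of large type as well.

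To identify a workable $P$, I would first canvass algebraic invariants whose behaviour is known to differ sharply between the large-type and non-large-type worlds. Natural candidates are (i) the structure of maximal abelian subgroups, (ii) centralizers and normalizers of infinite-order elements, (iii) the set of elements that are rank-one/contracting for the action on a suitable CAT(0) or hyperbolic complex, and (iv) the family of two-generator subgroups arising as centralizers of centralizers, which should recover "edge parabolics" intrinsically. Large-type Artin groups enjoy a rich rigidity theory (trivial centre in the irreducible non-dihedral case, well-understood centralizers of standard generators, acylindrical hyperbolicity on a modified Deligne complex, dihedral Artin $I_2(m)$ parabolics with $m\ge 3$); the goal is to extract from these features a first-order-style condition that is manifestly preserved by isomorphism.

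The proof then splits into two directions. For the "large-type implies $P$" direction, I would exploit the existing structural results for large-type Artin groups to verify that every $A_\Gamma$ with $\Gamma$ large-type satisfies $P$. For the converse — the heart of the matter — I would show that the presence of a single edge of label $2$ in $\Gamma'$ produces a subgroup configuration incompatible with $P$: two commuting standard generators give a $\Z^2$ subgroup whose centralizer and embedding in $A_{\Gamma'}$ differ qualitatively from those of every $\Z^2$ subgroup of a large-type Artin group (where $\Z^2$'s come only from the centre of a dihedral parabolic together with a commuting element). Combined with the visible amalgamated-product or HNN decomposition induced by a label-$2$ edge, this should obstruct $P$.

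The main obstacle is precisely the converse direction. The class of non-large-type Artin groups is extremely heterogeneous — right-angled, FC-type, two-dimensional, mixed — and a uniform obstruction must rule out every one of them. The most delicate cases are Artin groups which are "almost" large type (say, with a single label-$2$ edge attached to an otherwise large-type graph), where algebraically the group closely resembles a large-type one and where $P$ must be sensitive enough to detect even a single commutation relation among standard generators. A clean solution will likely require first proving an algebraic recognition theorem for the canonical dihedral parabolic subgroups, so that any putative "large-type witness" inside a non-large-type Artin group can be located and then shown to be incompatible with the additional commutation.
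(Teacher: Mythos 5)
Your proposal correctly identifies the overall strategy the paper uses --- find isomorphism-invariant algebraic properties, phrased in terms of centralisers and $\Z^2$-subgroups, that hold for large-type Artin groups and fail in the presence of a label-$2$ edge --- but it stops at the level of a plan: no property $P$ is actually defined, and neither direction is proved. The genuine content of the theorem lies entirely in the steps you defer. Concretely, three things are missing. First, the reduction that makes the ``heterogeneous converse'' tractable: you worry about having to rule out right-angled, FC-type, and mixed Artin groups uniformly, but the paper sidesteps this by invoking the prior result that two-dimensionality is itself an isomorphism invariant (Theorem~E of \cite{V-RigidityLarge}), so that only two-dimensional Artin groups with a label-$2$ edge need to be excluded. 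Without noticing this reduction, your converse direction is far harder than it needs to be. Second, the key technical input is a characterisation of which elements of a large-type Artin group have centraliser of the form $\Z\times F_{\geq 2}$ (Proposition~\ref{prop:char-generators}); this is what lets one recognise (conjugates of powers of) standard generators algebraically, and it requires a case analysis over elliptic and hyperbolic isometries of the Deligne complex that your sketch does not attempt.

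Third, and most importantly, a single uniform obstruction does not suffice: the paper needs \emph{three} different invariant properties depending on whether the label-$2$ edge is isolated, outer (one endpoint of valence $1$), or inner (both endpoints of valence at least $2$). An isolated label-$2$ edge is detected by the existence of an ``isolated $\Z^2$'' (every $\Z^2$ meeting it nontrivially is contained in it); an inner label-$2$ edge is detected by two commuting elements each with centraliser $\Z\times F_{\geq 2}$ generating a $\Z^2$; and an outer label-$2$ edge is detected by a more subtle normal-closure condition (property~$(P_1)$), whose failure is witnessed by a retraction onto the valence-$1$ generator. Your sketch gestures at the inner-edge phenomenon (``two commuting standard generators give a $\Z^2$ whose centraliser differs qualitatively''), but that argument genuinely fails for outer and isolated edges, where one of the generators has centraliser $\Z^2$ rather than $\Z\times F_{\geq 2}$ and no such configuration exists. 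Until these cases are identified and handled separately, the proof has a gap that cannot be closed by the single property you envisage.
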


 \medskip

Understanding whether classes of Artin groups are invariant under isomorphism, beside being of interest in and of itself, has wider implications for the Isomorphism Problem for Artin groups,  that is, the problem of determining which presentation graphs yield isomorphic Artin groups. As with many open problems about Artin groups, this problem is currently wide open for general Artin groups. (The corresponding Isomorphism Problem for Coxeter groups is also open, but much more is currently known, see for instance \cite{M-IsomorphismCoxeterSurvey}.) Instead most of the existing literature focuses on solving the Isomorphism Problem \textit{within} a given class of Artin groups. For instance, Droms solved the Isomorphism Problem within the class of right-angled Artin groups \cite{D-IsomorphismRAAGs}, and Paris solved the Isomorphism Problem within the class of spherical-type Artin groups \cite{P-IsomorphismSpherical}. More recently, the second author solved the Isomorphism Problem within the class of large-type Artin groups \cite{V-RigidityLarge}.   A possible approach to  describing the Artin groups isomorphic to a given  Artin group is to first show that it belongs to a manageable class of Artin groups that is invariant under isomorphism, and to then solve the Isomorphism Problem within that class. This approach can be carried out for large-type Artin groups:   Theorem~A together with the second author's solution to the Isomorphism Problem within that class allows us to completely describe the Artin groups isomorphic to a given large-type Artin group. In particular, we can characterise those large-type Artin groups that are \textit{rigid}, i.e. that have only one presentation graph up to isomorphism of presentation graphs, using the notion of twist-equivalent presentation graphs in the sense of \cite{RigidityArtinCoxeter}.

\begin{corB}
	Let $A_\Gamma$ be a large-type Artin group. If $A_{\Gamma'}$ is any Artin group isomorphic to $A_\Gamma$, then $\Gamma'$ is twist-equivalent to $\Gamma$.
		In particular,  $A_\Gamma$ is rigid if and only if $\Gamma$ does not contain a separating edge with odd label. 
\end{corB}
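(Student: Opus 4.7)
The plan is to combine Theorem~A with the second author's solution to the Isomorphism Problem within the class of large-type Artin groups~\cite{V-RigidityLarge}. Suppose $A_{\Gamma'}$ is an Artin group isomorphic to the large-type Artin group $A_\Gamma$. By Theorem~A, the class of large-type Artin groups being invariant under isomorphism, $A_{\Gamma'}$ is itself a large-type Artin group, and in particular $\Gamma'$ is a large-type presentation graph. We are now in the situation of two large-type Artin groups being isomorphic, and we can directly invoke the solution of the Isomorphism Problem within that class from~\cite{V-RigidityLarge}, which states precisely that such an isomorphism forces $\Gamma'$ and $\Gamma$ to be twist-equivalent in the sense of~\cite{RigidityArtinCoxeter}. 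This gives the first assertion.

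For the characterisation of rigidity, recall that $A_\Gamma$ is rigid exactly when every Artin group isomorphic to $A_\Gamma$ has a presentation graph isomorphic to $\Gamma$. By the first assertion, this reduces to the purely combinatorial question of whether the twist-equivalence class of $\Gamma$ consists of a single isomorphism type. Since twist-equivalence is generated by the elementary twist move performed along a separating edge with odd label, I would argue the two implications separately. If $\Gamma$ contains no such edge, then no nontrivial twist move can be applied, so $\Gamma$ is the unique (up to isomorphism) labelled graph in its twist-equivalence class, and $A_\Gamma$ is rigid.

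For the converse, I would assume $\Gamma$ has a separating edge $e$ with odd label, and exhibit a twist of $\Gamma$ along $e$ that produces a non-isomorphic labelled graph $\Gamma'$; by construction $A_{\Gamma'} \cong A_\Gamma$, witnessing the failure of rigidity. The step requiring a little care here is ruling out the accidental case where the twist move along $e$ happens to produce a graph isomorphic to $\Gamma$: this is a purely combinatorial check on the two pieces of $\Gamma \setminus e$, but I expect it to be the only non-formal part of the argument, since it relies on the precise description of the twist operation and the fact that twisting about a separating edge really does change the global combinatorics of how the two sides are glued. Once this is verified, both directions of the equivalence are established, completing the proof.
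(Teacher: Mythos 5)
Your argument for the first assertion is exactly the paper's: Theorem~A shows that $A_{\Gamma'}$ is itself of large type, and the main theorem of \cite{V-RigidityLarge} (two large-type Artin groups are isomorphic if and only if their presentation graphs are twist-equivalent) then gives the conclusion. Note that Theorem~A is also doing real work in the second assertion: \cite{V-RigidityLarge} on its own only yields rigidity \emph{within} the class of large-type Artin groups, and it is Theorem~A that upgrades this to rigidity among all Artin groups, which is what the corollary asserts. Your proof uses Theorem~A in this way (via the first assertion), so that part is fine.

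Where you diverge from the paper is in the rigidity characterisation. The paper simply imports the combinatorial criterion (rigid if and only if $\Gamma$ has no separating odd-labelled edge) from \cite{V-RigidityLarge}, whereas you attempt to re-derive it, and the step you flag as ``requiring a little care'' is a genuine gap rather than a routine verification: a single twist along a separating odd-labelled edge need \emph{not} produce a non-isomorphic labelled graph. The twist re-glues one side of a decomposition $\Gamma = \Gamma_1 \cup_e \Gamma_2$ after swapping the two endpoints of $e$, so whenever the attaching data is symmetric under this swap (for instance, two triangles with matching labels glued along $e$, or a pendant edge), the twisted graph is isomorphic to $\Gamma$. Hence one cannot simply perform one twist and declare the group non-rigid; establishing the ``rigid $\Rightarrow$ no separating odd-labelled edge'' direction requires a more careful analysis of the twist-equivalence class than you indicate, and this is precisely the content supplied by the rigidity corollary of \cite{V-RigidityLarge}. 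As written, your converse direction is incomplete; the intended proof closes it by citation rather than by the combinatorial argument you sketch.
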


Given the implication for the Isomorphism Problem, we feel that the problem of determining which classes of Artin groups are invariant under isomorphism should be more actively studied. At the moment, very few classes of Artin groups are known to be invariant under isomorphism, beside large-type Artin groups and their variants (see Corollary~C). In particular, it is unknown whether the class of spherical-type Artin groups is invariant under isomorphism. Let us however mention two such invariant classes: the class of two-dimensional Artin groups (which coincides with the class of Artin groups whose maximal free abelian subgroups have rank $2$), and the class of right-angled Artin groups. For the latter class, it follows from \cite{Baudisch} that two elements of a given right-angled Artin group either commute or generate a free subgroup, hence such groups cannot contain a large-type dihedral Artin subgroup. 
\medskip 

While this article focuses mainly on large-type Artin groups, our main result has implications for the Isomorphism Problem  for the more general class of two-dimensional Artin groups, which is still open. In \cite{V-RigidityLarge}, the second author showed that, given an integer $k \geq 3$, the existence in the presentation graph of an edge labelled $k$ is an invariant of isomorphism for two-dimensional Artin groups. Together with Theorem~A, this provides new information about isomorphism classes of two-dimensional Artin groups:

\begin{corC}
	Let $A_\Gamma$ be a two-dimensional Artin group. Then the set of labels of~$\Gamma$ is an invariant of isomorphism for the class of all Artin groups. 
	In particular, the families of extra-large-type Artin groups, XXL-type Artin groups, etc. are all invariant under isomorphism.
\end{corC}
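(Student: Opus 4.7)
The strategy is to combine Theorem~A with the result of \cite{V-RigidityLarge}, via the intermediate observation that two-dimensionality is itself an isomorphism invariant.

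First I would note that the class of two-dimensional Artin groups is invariant under isomorphism, since (as recalled in the introduction) it coincides with the class of Artin groups whose maximal free abelian subgroups have rank $2$, a purely algebraic condition. Hence if $A_\Gamma$ is two-dimensional and $A_{\Gamma'}$ is any Artin group with $A_{\Gamma'} \cong A_\Gamma$, then $A_{\Gamma'}$ is also two-dimensional.

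Next, within the class of two-dimensional Artin groups, the theorem of \cite{V-RigidityLarge} asserts that for each integer $k \geq 3$ the existence of an edge labelled $k$ in the presentation graph is an isomorphism invariant; combined with the previous observation this gives, for every such $k$, that $\Gamma$ has an edge labelled $k$ if and only if $\Gamma'$ does. The remaining case $k=2$ I would handle by invoking Theorem~A: a graph yields a large-type Artin group precisely when it has no edge labelled $2$, and Theorem~A says this property is preserved under isomorphism. Combining the two cases, $\Gamma$ and $\Gamma'$ carry exactly the same set of labels.

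For the \emph{in particular} statement, if $A_\Gamma$ is extra-large-type then all labels of $\Gamma$ lie in $\{4,5,\dots\}$, so by the preceding paragraph the same holds for $\Gamma'$, proving that extra-large-type is invariant under isomorphism; the same reasoning applies to XXL-type and beyond, with $4$ replaced by the relevant threshold. No substantial new technique is anticipated: the proof is essentially a short bookkeeping argument stitching Theorem~A together with \cite{V-RigidityLarge}, the only point demanding a moment's thought being the recognition that the label $k=2$ case is captured precisely by Theorem~A.
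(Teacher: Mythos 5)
Your proposal is correct and follows essentially the same route the paper indicates for Corollary~C: two-dimensionality is an isomorphism invariant, the result of \cite{V-RigidityLarge} handles each label $k \geq 3$, and Theorem~A supplies exactly the missing $k=2$ case via the equivalence ``large type $\Leftrightarrow$ no edge labelled $2$.'' The ``in particular'' deduction for extra-large and XXL type is the same immediate bookkeeping the paper intends.
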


We now explain the structure of this paper. In order to distinguish large-type Artin groups from other Artin groups, we introduce properties (formulated in terms of subgroups and centralisers of elements, and thus invariant under isomorphism) that distinguish the $\Z^2$-subgroups coming from an edge labelled 2 of the presentation graph from the other $\Z^2$-subgroups of the group. The approach is geometric in nature and relies on the action of Artin groups on their Deligne complex. These actions were previously used to classify the $\Z^2$-subgroups of two-dimensional Artin groups in \cite{MP-Abelian}, and to study the centralisers of elements in \cite{MP-Acylindrical, V-RigidityLarge}. 

After a section recalling the relevant results from the literature, we prove in Section~\ref{sec:key_prop} the key Proposition~\ref{prop:char-generators} that provides a purely algebraic characterisation of the conjugacy class of (most) standard generators of a large-type Artin group. This proposition is our key tool in proving Theorem~A in Section~\ref{sec:proof_thmA}.

\section{Background on Artin groups and associated complexes}

In this preliminary section, we recall some standard terminology about Artin groups, as well as the construction of the Deligne complex. We also introduce several subsets of the Deligne complex that are used to understand the structure of centralisers of elements.

\medskip

\paragraph{Artin groups and Deligne complexes}

We start by recalling the definition of an Artin group. A \textbf{presentation graph} is a finite simplicial graph  $\Gamma$ where every edge between vertices $a, b$ is labelled by an integer $m_{ab} \geq 2$. The \textbf{Artin group} associated to $\Gamma$ is the group $A_\Gamma$ given by the following presentation: 
$$  \langle a \in V(\Gamma ) ~|~ \underbrace{aba\cdots}_{m_{ab}~\mathrm{terms}} = \underbrace{bab\cdots}_{m_{ab}~\mathrm{terms}}  ~~ 
\mbox{ whenever } a,b \mbox{ are connected by an edge of }\Gamma \rangle.$$
The corresponding \textbf{Coxeter group} is the quotient of $A_\Gamma$ obtained by adding the relation $a^2 =1$ for every vertex $a$ of $\Gamma$.

We recall some standard terminology. An Artin group $A_\Gamma$ whose presentation graph $\Gamma$ is a single edge is called a \textbf{dihedral} Artin group. Given a presentation graph $\Gamma$, a \textbf{standard generator} is a vertex of $\Gamma$, i.e. one of the generators appearing in the above presentation of $A_\Gamma$.
Given an induced subgraph $\Gamma'$ of $\Gamma$, the associated \textbf{standard parabolic subgroup} is the subgroup of $A_\Gamma$ generated by the vertices of $\Gamma'$.  By a standard result of van der Lek \cite{vdL}, this subgroup is isomorphic to the Artin group $A_{\Gamma'}$, and we will henceforth use this notation to denote standard parabolic subgroups. A  \textbf{parabolic subgroup} of $A_\Gamma$ is a conjugate of a standard parabolic subgroup.

An Artin group $A_{\Gamma}$ is of \textbf{spherical type} if the corresponding Coxeter group $W_{\Gamma}$ is finite. It is of \textbf{large type} if all labels of $\Gamma$ are at least $3$, and it is \textbf{two-dimensional} if the presentation graph $\Gamma$ is not discrete and does not contain a standard parabolic subgroup on three standard generators that is of spherical type. 

The standard parabolic subgroups of an Artin group are used to define a simplicial complex that will play a key role in our proof of Theorem A:

\begin{defin}[{\cite{CD}}]
	Let $A_\Gamma$ be an Artin group. The \textbf{(modified) Deligne complex}~$D_\Gamma$ is the simplicial complex defined as follows:
	\begin{itemize}
		\item vertices of $D_\Gamma$ correspond to left cosets of standard parabolic subgroups of spherical type of $A_\Gamma$.
		\item given an element $g \in A_\Gamma$ and a chain of inclusions $\Gamma_0 \subsetneq \cdots \subsetneq \Gamma_k $ of induced subgraphs of $\Gamma$ such that each standard parabolic subgroup $A_{\Gamma_i}$ is of spherical type, the vertices 
		$$gA_{\Gamma_0}, \ldots, gA_{\Gamma_k}$$
		span a $k$-simplex of $D_\Gamma$. 
	\end{itemize}
The group $A_\Gamma$ acts on $D_\Gamma$ by multiplication on the left. 
	\end{defin}

In the case of two-dimensional Artin groups, we have the following crucial result:

\begin{lem}[{\cite{CD}}]
	If $A_\Gamma$ is a two-dimensional Artin group, then the Deligne complex $D_\Gamma$ is a two-dimensional simplicial complex that admits a piecewise-Euclidean CAT(0) metric.
\end{lem}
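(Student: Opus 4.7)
My plan is to verify the two conclusions of the lemma separately, following the original Charney--Davis approach. For the dimension, the two-dimensionality hypothesis on $A_\Gamma$ forces the standard parabolic subgroups of spherical type to be exactly the trivial subgroup, the rank-one cyclic subgroups $A_{\{a\}}$ for $a \in V(\Gamma)$, and the dihedral subgroups $A_{\{a,b\}}$ for edges $\{a,b\}$ of $\Gamma$ (recall that every dihedral Artin group is of spherical type). Any strict chain of induced subgraphs supporting spherical standard parabolic subgroups therefore has length at most three, so every simplex of $D_\Gamma$ has dimension at most two.

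Next I would equip $D_\Gamma$ with a Moussong-type piecewise-Euclidean metric: each top-dimensional simplex, corresponding to a chain $gA_\emptyset \subsetneq gA_{\{a\}} \subsetneq gA_{\{a,b\}}$ with $m=m_{ab}$, is declared isometric to a prescribed Euclidean triangle whose angles depend only on $m$ (chosen so that the angle at the rank-$2$ vertex makes the link of such a vertex a circle of length exactly $2\pi$, and so that the angle at the rank-$1$ vertex is at least $\pi/2$). These shapes glue consistently along shared edges and yield an $A_\Gamma$-invariant piecewise-Euclidean structure on $D_\Gamma$.

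To verify the CAT(0) property, I would apply the standard Bridson--Haefliger link criterion in dimension~$2$: global CAT(0) is equivalent to simple connectivity together with every vertex link being CAT(1), i.e.\ having systole at least $2\pi$. Simple connectivity of $D_\Gamma$ follows from a standard combinatorial argument comparing $D_\Gamma$ to the Coxeter complex of $W_\Gamma$ via the natural equivariant simplicial map and invoking known connectivity results for the latter. The link conditions then reduce to checking three vertex types: links at rank-$2$ vertices are Coxeter $1$-complexes of finite dihedral groups (circles of total length $2\pi$ by construction), and links at rank-$1$ vertices are complete bipartite graphs whose edges have length at least $\pi/2$, so systole at least $2\pi$.

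The main obstacle is the link condition at a rank-$0$ vertex $gA_\emptyset$, where the link is a graph whose combinatorial structure is governed by the vertices and edges of $\Gamma$, with edge-lengths determined by the chosen Moussong angles. This is the step where the two-dimensionality hypothesis enters essentially: the absence of spherical standard parabolic subgroups on three generators translates into the Coxeter-type inequality $1/m_{ab}+1/m_{bc}+1/m_{ac}\leq 1$ for every triangle of $\Gamma$, which is exactly what is needed to exclude combinatorial $3$-cycles of metric length less than $2\pi$ in the link. Longer combinatorial cycles are then ruled out by a direct estimate using the lower bound on individual edge lengths, completing the proof.
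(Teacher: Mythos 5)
The paper offers no proof of this lemma; it is quoted directly from Charney--Davis \cite{CD}, so your proposal has to be measured against their argument. Your overall architecture --- assigning Moussong-type Euclidean shapes to the triangles, invoking the two-dimensional link criterion, and checking links by the rank of the vertex --- is the correct one, and your treatment of the rank-$0$ and rank-$1$ links is right: the inequality $1/m_{ab}+1/m_{bc}+1/m_{ac}\le 1$ supplied by two-dimensionality is exactly what rules out short $3$-cycles in the link of $gA_\emptyset$, and longer cycles are handled by the edge-length lower bound as you say.

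There is, however, a genuine gap at the rank-$2$ vertices. The link of $gA_{\{a,b\}}$ in the Deligne complex is \emph{not} the Coxeter complex of the finite dihedral group: the triangles containing $gA_{\{a,b\}}$ are indexed by cosets $ghA_\emptyset$, $ghA_{\{a\}}$, $ghA_{\{b\}}$ with $h$ ranging over the \emph{infinite} group $A_{\{a,b\}}$, so this link is an infinite bipartite graph, not a circle of length $2\pi$ ``by construction''. The CAT(1) condition at such a vertex is the assertion that every embedded cycle in that graph has at least $4m_{ab}$ edges, and establishing this girth bound is the technical heart of the Charney--Davis proof (it rests on the normal form and lattice structure of dihedral Artin groups). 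By identifying this link with the corresponding link in the Davis complex of the Coxeter group, you have silently replaced the hardest step of the proof with a triviality. Separately, your argument for simple connectivity runs in the wrong direction: one cannot deduce simple connectivity of $D_\Gamma$ from connectivity of the Coxeter complex via the natural equivariant map, since simple connectivity of the target says nothing about the source. The standard argument is instead that $A_\Gamma$ is the colimit of its spherical standard parabolic subgroups (every defining relator involves at most two generators), so the development of the associated simple complex of groups --- which is $D_\Gamma$ --- is simply connected.
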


\bigskip 

\paragraph{Standard trees and centralisers of standard generators.} 

\begin{defin}[{\cite[Definition~4.1]{MP-Acylindrical}}]
	Let $A_\Gamma$ be a two-dimensional Artin group. Let $x \in A_\Gamma$ be an element conjugated to a standard generator. The fixed-point set~$T_x\coloneqq \mbox{Fix}(x)$ is a convex subtree of $D_\Gamma$, called a \textbf{standard tree}.
\end{defin}

We have the following elementary results about standard trees: 

\begin{lem}[{\cite[Corollary 2.18]{HMS-Artin}}]\label{lem:tree-intersection}
	Let $A_\Gamma$ be a two-dimensional Artin group. Let $T, T'$ be two distinct standard trees of $D_\Gamma$. Then $T, T'$ are either disjoint or intersect along a single vertex (corresponding to a coset of a standard dihedral parabolic subgroup).
\end{lem}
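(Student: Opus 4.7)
The plan is to argue by contradiction: assume the intersection $T \cap T'$ either contains two distinct vertices, or consists of a single vertex not of the claimed form, and derive a contradiction. Since $D_\Gamma$ is CAT(0) and each $T_x = \Fix(x)$ is convex (as the fixed-point set of an isometry), the intersection $T \cap T'$ is convex. In particular, if it contains two distinct vertices, the unique CAT(0) geodesic between them lies in both subtrees, and so $T \cap T'$ contains an entire edge of $D_\Gamma$.

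The next step is to classify the vertices of $D_\Gamma$. In the two-dimensional setting, the spherical standard parabolic subgroups of $A_\Gamma$ are exactly $\{1\}$, the cyclic groups $\langle a\rangle$ for $a \in V(\Gamma)$, and the dihedral groups $A_{\{a,b\}}$ for edges $\{a,b\}$ of $\Gamma$. So vertices come in three types, and since edges of $D_\Gamma$ correspond to strictly comparable cosets they always join vertices of different types. A type-1 vertex has trivial stabilizer, hence never belongs to $\Fix(x)$ for $x \neq 1$; so the only possible edges in $T \cap T'$ are of the form $(g\langle a\rangle, gA_{\{a,b\}})$, whose stabilizer is the cyclic subgroup $g\langle a\rangle g^{-1}$.

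Writing $T = T_x$ and $T' = T_y$ with $x, y$ each conjugate to a standard generator, such a shared edge would force $x = ga^m g^{-1}$ and $y = ga^n g^{-1}$ for some non-zero integers $m, n$. The key algebraic input is a primitivity statement for standard generators: if $a^m$ is conjugate to a standard generator of $A_\Gamma$, then $m = 1$. This follows from the abelianization of $A_\Gamma$, which is free abelian with basis indexed by the connected components of the subgraph of $\Gamma$ spanned by the odd-labelled edges, and in which each standard generator maps to a basis vector; equating images of $a^m$ and a standard generator then forces $m = 1$. Consequently $m = n = 1$, so $x = y$ and $T = T'$, contradicting our assumption.

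This rules out any edge in $T \cap T'$. If $T \cap T'$ reduces to a single vertex $v$, the same analysis applies to $v$: type $1$ is impossible (trivial stabilizer), and if $v$ were of type $2$ with stabilizer $g\langle a\rangle g^{-1}$, then $x = ga^m g^{-1}$ and $y = ga^n g^{-1}$ again, and the same primitivity argument would force $x = y$ and $T = T'$. Hence $v$ must be of type $3$, i.e., a coset of a standard dihedral parabolic subgroup, as claimed. I expect the only substantive step to be the primitivity statement; once it is granted, what remains is a routine case analysis on the vertex types of the Deligne complex.
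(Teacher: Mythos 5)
This lemma is not proved in the paper: it is imported verbatim from \cite{HMS-Artin}*{Corollary 2.18}, so there is no in-paper argument to compare against. Your blind proof is, as far as I can check, correct and self-contained, and it follows the same circle of ideas used in the cited source: convexity of fixed-point sets in the CAT(0) Deligne complex, the observation that in the two-dimensional case the spherical standard parabolics are exactly the trivial, cyclic, and dihedral ones (so standard trees live in the type-$1$/type-$2$ part of the $1$-skeleton and the relevant stabilisers are conjugates of $\langle a\rangle$), and the primitivity of standard generators detected in the abelianisation $\Z^{\pi_0(\Gamma_{\mathrm{odd}})}$. Two small points you leave implicit but which are easily supplied: since the action preserves vertex types, $\Fix(x)$ is genuinely a subcomplex, so $T\cap T'$ is a convex subcomplex and your dichotomy (contains a full edge, or is a single vertex) is exhaustive; and your primitivity argument should also exclude $m=-1$, which it does, since a basis vector of a free abelian group cannot equal the negative of a basis vector. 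With those remarks, the case analysis goes through and the conclusion that a singleton intersection must be a type-$3$ (dihedral-coset) vertex is exactly the statement of the lemma.
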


As shown in \cite{MP-Acylindrical},  the centraliser of a standard generator acts cocompactly on the standard tree of that generator, and that action can be used to give a complete description of the centraliser. This was done in \cite[Remark 4.6]{MP-Acylindrical}, and we also refer the reader to \cite[Figure 1]{CMV-Systolic} for more details and some concrete computations in the large-type case. In this article, we will not need the full description of these centralisers, but only the following result that follows directly from the explicit description given in \cite{MP-Acylindrical} and \cite{CMV-Systolic}. 

\begin{lem}[{\cite[Remark 4.6]{MP-Acylindrical}}]\label{lem:centraliser-large}
	Let $A_\Gamma$ be a two-dimensional Artin group and let $x$ be a vertex of $\Gamma$. Let $\overline{\Gamma}$ be the graph obtained from $\Gamma$ by cutting it along the middle of even-labelled edges. (In particular, each even-labelled edge of $\Gamma$ yields two distinct edges of $\overline{\Gamma}$.) We denote by $\overline{\Gamma}_x$ the connected component of $x$ in $\overline{\Gamma}$.
	
	The centraliser $C(x)$ splits as a direct product of the form~$\langle x \rangle \times F$, where~$F$ is a finitely generated free subgroup of $A_\Gamma$ of rank $\mbox{rk}(F)= |E( \overline{\Gamma}_x)| + \mbox{rk} \big(\pi_1(\overline{\Gamma}_x)\big)$.
\end{lem}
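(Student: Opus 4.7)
My plan is to apply Bass--Serre theory to the action of $C(x)$ on the standard tree $T_x:=\Fix(x)\subseteq D_\Gamma$. Since $x$ fixes $T_x$ pointwise, the (central) cyclic subgroup $\langle x\rangle$ lies in the kernel of the action, and the goal is to identify the quotient graph of groups over $T_x/C(x)$, quotient it by $\langle x\rangle$, and recognise the resulting group as a free group of the claimed rank.

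The first task is to compute the vertex and edge stabilisers of the $C(x)$-action on $T_x$. A type-1 vertex $gA_b$ is fixed by $x$ iff $x\in\langle gbg^{-1}\rangle$, and primitivity of standard generators of $A_\Gamma$ (together with the fact that in a two-dimensional Artin group no standard generator is conjugate to its inverse) forces $gbg^{-1}=x$; the corresponding stabiliser in $C(x)$ is thus simply $\langle x\rangle$. A type-2 vertex $gA_{bc}$ is fixed iff $g^{-1}xg\in A_{bc}$, and its stabiliser in $C(x)$ is the centraliser of $x$ in the dihedral parabolic $gA_{bc}g^{-1}$, which by a classical computation (involving the half-twist $\Delta$, with slightly different formulas depending on the parity of $m_{bc}$) is of the form $\langle x\rangle\times\Z$. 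Edges of $T_x$ correspond to inclusions $A_b\subset A_{bc}$ with $A_b=\langle x\rangle$, so edge stabilisers coincide with the adjacent type-1 stabilisers and equal $\langle x\rangle$.

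Next I identify the quotient graph $\mathcal{G}:=T_x/C(x)$. There is a single orbit of type-1 vertices (the orbit of $A_x$), and the orbits of type-2 vertices associated to an edge $bc$ of $\Gamma$ are in bijection with the $A_{bc}$-conjugacy classes of $A_\Gamma$-conjugates of $x$ lying in $A_{bc}$ (by Lemma~\ref{lem:tree-intersection} and conjugacy control in parabolic subgroups). The role of cutting $\Gamma$ along even-labelled edges is precisely to encode the following dichotomy: in a dihedral Artin group $A_{bc}$ the generators $b$ and $c$ are conjugate when $m_{bc}$ is odd but not when $m_{bc}$ is even, so a single edge of $\Gamma$ may contribute either one or two type-2 orbits, and this is exactly the combinatorics recorded by $\overline{\Gamma}_x$.

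Applying Bass--Serre with the stabilisers above and then quotienting by the central subgroup $\langle x\rangle$ (which is contained in every vertex and edge group) produces a graph of groups over $\mathcal{G}$ with trivial edge groups, trivial type-1 vertex groups, and $\Z$ type-2 vertex groups. Its fundamental group is therefore a free product of the type-2 vertex groups with $\pi_1(\mathcal{G})$, which is a free group $F$; an Euler-characteristic count yields $\mathrm{rk}(F)=|E(\overline{\Gamma}_x)|+\mathrm{rk}(\pi_1(\overline{\Gamma}_x))$. Finally, the central extension $1\to\langle x\rangle\to C(x)\to F\to 1$ splits as a direct product because $F$ is free and $H^2(F,\Z)=0$, yielding $C(x)=\langle x\rangle\times F$. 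The main obstacle in the plan is the combinatorial identification of $\mathcal{G}$ with $\overline{\Gamma}_x$: it requires careful tracking of how $A_\Gamma$-conjugacy classes of $x$ distribute themselves inside the various dihedral parabolics, and in particular how they interact with odd-labelled paths in $\Gamma$ that may globally connect $x$ to distant vertices.
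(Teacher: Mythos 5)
The paper does not actually prove this lemma: it is quoted verbatim from \cite[Remark~4.6]{MP-Acylindrical} (with \cite{CMV-Systolic} for explicit computations), and your strategy --- decomposing $C(x)$ via its cocompact action on the standard tree $T_x=\mathrm{Fix}(x)$ and reading off a graph of groups --- is precisely the mechanism behind the cited result. The skeleton of your argument (edge and type-$1$ stabilisers equal to $\langle x\rangle$, type-$2$ stabilisers of the form $\langle x\rangle\times\Z$ generated by $x$ and the appropriate power of the Garside element, the odd/even dichotomy for conjugacy of $b$ and $c$ in $A_{bc}$, and the splitting of the central extension over a free quotient) is sound.

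There is, however, a concrete error in your identification of the quotient graph $\mathcal{G}=T_x/C(x)$: the claim that there is a single orbit of type-$1$ vertices. The action of $A_\Gamma$ (hence of $C(x)$) preserves the type of a coset, so a coset $gA_y$ can never lie in the orbit of $A_x$ unless $y=x$; the type-$1$ vertices of $T_x$ are the cosets $gA_y$ with $gyg^{-1}=x$, where $y$ ranges over \emph{all} vertices of $\Gamma$ joined to $x$ by a path of odd-labelled edges, and these contribute one orbit per such $y$. The correct picture is that $\mathcal{G}$ is a subdivision of $\overline{\Gamma}_x$: its type-$1$ vertices are the $\Gamma$-vertices of $\overline{\Gamma}_x$, its type-$2$ vertices are the edges of $\overline{\Gamma}_x$ (midpoints of odd edges, carrying two quotient edges, and free ends of even half-edges, carrying one), so that $\pi_1(\mathcal{G})\cong\pi_1(\overline{\Gamma}_x)$ and the number of $\Z$-vertex groups is $|E(\overline{\Gamma}_x)|$. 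If one instead takes your single type-$1$ vertex literally, the Euler-characteristic count fails: for the $(3,3,3)$ triangle it would yield a free group of rank $6$ rather than the correct rank $4$. You reach the right formula only because you silently switch to the correct $\mathcal{G}$ at the end. Two smaller points you gloss over and should justify: that any $A_\Gamma$-conjugate of $x$ lying in a dihedral parabolic $A_{bc}$ is already $A_{bc}$-conjugate to a standard generator (this is nontrivial conjugacy control for parabolic subgroups), and that $\langle x\rangle$ is a \emph{direct factor} of the $\Z^2$ type-$2$ stabiliser, which is what makes the quotient vertex groups infinite cyclic rather than finite cyclic.
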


Since the structure of centralisers will play a key role in this article, we introduce the following terminology:

\begin{notation}
	We say that a group is of the form $\Z\times F_{\geq 2}$ if it is isomorphic to a direct product of the form $\Z\times F$, where $F$ is a non-abelian free group.
\end{notation}

As a direct consequence of Lemma~\ref{lem:centraliser-large}, we get:

\begin{lem}\label{lem:centraliser-valence2}
	Let $A_\Gamma$ be a two-dimensional Artin group. If $x$ is a standard generator of $A_\Gamma$ that corresponds to a vertex of $\Gamma$ of valence at least $2$, then the centraliser~$C(x)$ is of the form $\Z \times F_{\geq 2}$.
\end{lem}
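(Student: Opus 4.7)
The plan is to deduce this as a direct consequence of Lemma~\ref{lem:centraliser-large}, by verifying that the free factor in the splitting of $C(x)$ has rank at least $2$.

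First I would invoke Lemma~\ref{lem:centraliser-large} to write $C(x) = \langle x \rangle \times F$, where $F$ is a finitely generated free group of rank
\[ \mathrm{rk}(F) = |E(\overline{\Gamma}_x)| + \mathrm{rk}\big(\pi_1(\overline{\Gamma}_x)\big). \]
Since standard generators of Artin groups have infinite order, the cyclic factor $\langle x \rangle$ is isomorphic to $\Z$, so the statement reduces to checking that $F$ is non-abelian, i.e.\ that $\mathrm{rk}(F) \geq 2$.

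Next I would analyse how the cutting operation that produces $\overline{\Gamma}$ from $\Gamma$ affects the local structure at $x$. The key observation is that each edge of $\Gamma$ incident to $x$ contributes at least one edge of $\overline{\Gamma}_x$ incident to $x$: an odd-labelled edge survives unchanged in $\overline{\Gamma}$, while an even-labelled edge $\{x, y\}$ is split into two edges of $\overline{\Gamma}$, exactly one of which remains incident to $x$ and therefore lies in the component $\overline{\Gamma}_x$. Thus the number of edges of $\overline{\Gamma}_x$ incident to $x$ equals the valence of $x$ in $\Gamma$, which is at least $2$ by hypothesis. Consequently $|E(\overline{\Gamma}_x)| \geq 2$, and since $\mathrm{rk}(\pi_1(\overline{\Gamma}_x))$ is non-negative, we obtain $\mathrm{rk}(F) \geq 2$, giving $C(x) \cong \Z \times F_{\geq 2}$.

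I do not anticipate any serious obstacle: the result is essentially a structural corollary of the explicit centraliser description provided by Lemma~\ref{lem:centraliser-large}. The only point requiring mild care is the correct interpretation of the cutting operation, specifically the fact that cutting an even-labelled edge does not reduce the valence at either of its endpoints, so that the edge-count at $x$ inside $\overline{\Gamma}_x$ matches the valence of $x$ in $\Gamma$.
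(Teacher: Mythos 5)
Your proof is correct and matches the paper's intent exactly: the paper offers no written argument, simply declaring the lemma ``a direct consequence'' of Lemma~\ref{lem:centraliser-large}, and your verification that the valence of $x$ is preserved under the cutting operation (so that $|E(\overline{\Gamma}_x)|\geq 2$ and hence $\mathrm{rk}(F)\geq 2$) is precisely the omitted computation.
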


\paragraph{Transverse trees and centralisers of hyperbolic elements} If $G$ is a group acting by isometries on a CAT(0) space $X$, the \textbf{minset} of an element $g \in G$ is the subspace $Min(g)$ that consists of all the points of $X$ for which the displacement induced by $g$ is minimal. When  $g \in G$ acts hyperbolically on $X$, then $Min(g)$ decomposes as a direct product $\mathcal{T} \times \mathbb{R}$, where $\mathbb{R}$ is an infinite line. The centraliser $C(g)$ acts on $X$ by preserving $Min(g)$ and $\mathcal{T}$. An \textbf{axis} of a hyperbolic element $g$ is a geodesic line of $X$ on which $g$ acts as a (non-trivial) translation. We refer the reader to \cite[Chapter II.6]{BH} for more details.

\begin{defin} [{\cite[Lemma 3.4]{V-RigidityLarge}}]
Let $A_{\Gamma}$ be a two-dimensional Artin group with Deligne complex $D_{\Gamma}$, and let $g \in A_{\Gamma}$ be an element acting hyperbolically. Then the minset of $g$ decomposes as a direct product $\mathcal{T} \times \mathbb{R}$ where $\mathcal{T}$ is a (real-)tree, called the \textbf{tranverse-tree} associated with $g$.
\end{defin}

\begin{lem} [{\cite[Lemma 3.13, Proposition 3.26, Lemma 4.3, Lemma 4.13]{V-RigidityLarge}}] \label{LemmaMinset}
Let $A_{\Gamma}$ be a large-type Artin group, and let $g \in A_{\Gamma}$ be a hyperbolic element. Then either:
\begin{itemize}
    \item the transverse tree $\mathcal{T}$ is bounded, or
    \item the transverse tree $\mathcal{T}$ is a line and $Min(g)$ is a subcomplex isometric to a tiling of the Euclidean plane by equilateral triangles, or
    \item  the element $g$ admits an axis that is contained in a standard tree of $D_\Gamma$, and $C(g)$ is isomorphic to the dihedral Artin group 
$\langle x, y ~|~ xyxy=yxyx \rangle.$
\end{itemize}
\end{lem}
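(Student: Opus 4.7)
The plan is to analyse the geometry of $Min(g) = \mathcal{T} \times \mathbb{R}$ inside the piecewise-Euclidean $CAT(0)$ complex $D_\Gamma$, splitting into cases according to the structure of the transverse tree $\mathcal{T}$. The underlying principle is that, in large type, every isometrically embedded Euclidean plane in $D_\Gamma$ is a subcomplex tiled by equilateral triangles; this rigidity will drive all three alternatives.

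If $\mathcal{T}$ is bounded we are in the first alternative, so assume $\mathcal{T}$ is unbounded. Then $\mathcal{T}$ contains a bi-infinite geodesic $\ell$, and $\ell \times \mathbb{R}$ embeds as a Euclidean plane $F$ in $D_\Gamma$. I would first show that $F$ is necessarily a subcomplex tiled by equilateral triangles: since $D_\Gamma$ is two-dimensional, $F$ must lie along the $2$-cells, and the link condition at each vertex of $D_\Gamma$ (which is explicit in large type) forces the tiling by equilateral triangles coming from dihedral spherical parabolics. When $\mathcal{T}$ is itself a line, $Min(g) = F$ is this tiling and we land in the second alternative.

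The remaining case is when $\mathcal{T}$ is unbounded but contains a branch point. Such a branch produces distinct flats $F_1, F_2$ inside $Min(g)$ sharing the axis $\alpha$ of $g$. Examining the link of $D_\Gamma$ along $\alpha$, the presence of two tiled flats forces $\alpha$ to lie in the fixed set of some standard generator $x$, i.e.\ inside a standard tree $T_x$. Then $x \in C(g)$, and the structure of $C(x)$ given by Lemma \ref{lem:centraliser-valence2}, intersected with the requirement that elements of $C(g)$ preserve the product decomposition $\mathcal{T} \times \mathbb{R}$, identifies $C(g)$ with the dihedral Artin group $\langle x, y \mid xyxy = yxyx \rangle$, yielding the third alternative.

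The main obstacle is the branch-point case: rigorously extracting the standard tree from the branching of $\mathcal{T}$, and performing the centraliser computation that pins down the specific dihedral label $4$ (rather than some other dihedral type) purely from the local geometry of $D_\Gamma$. This requires a delicate combination of link computations in large-type Deligne complexes with the tree-intersection rigidity of Lemma \ref{lem:tree-intersection}, which explains why the cited result is patched together from four separate statements of \cite{V-RigidityLarge}.
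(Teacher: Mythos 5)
First, a structural point: the paper does not prove Lemma~\ref{LemmaMinset} at all --- it is imported verbatim as a black box from \cite{V-RigidityLarge} (Lemma 3.13, Proposition 3.26, Lemmas 4.3 and 4.13), so there is no in-paper argument to compare yours against. Judged on its own terms, your sketch has the right overall shape (case split on the transverse tree, rigidity of flats in the large-type Deligne complex, special treatment of branching), but several steps are genuine gaps rather than routine verifications. (1) ``$\mathcal{T}$ unbounded $\Rightarrow$ $\mathcal{T}$ contains a bi-infinite geodesic'' is false for general complete $\mathbb{R}$-trees (a ray is unbounded and contains no line), and your subsequent dichotomy ``line vs.\ branch point'' does not cover the ray case; ruling it out, or absorbing it into one of the three alternatives, needs an argument using the simplicial structure of $D_\Gamma$ or the $C(g)$-action. (2) The assertion that every isometrically embedded flat in $D_\Gamma$ is a subcomplex tiled by equilateral triangles is essentially the whole content of Proposition 3.26 of \cite{V-RigidityLarge}; it is precisely where the large-type hypothesis enters (the remark after Proposition~\ref{prop:char-generators} shows the $(2,4,4)$ triangle behaves differently), so asserting it from ``the link condition is explicit'' is not a proof.

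(3) The branch-point case has two further unjustified jumps. From $\alpha\subseteq\mathrm{Fix}(x)$ you conclude $x\in C(g)$; but $x$ fixing an axis of $g$ pointwise only gives that $g^{-1}xgx^{-1}$ fixes $\alpha$ pointwise, not that it is trivial, so commutation does not follow. And the identification of $C(g)$ with the specific dihedral Artin group $\langle x,y\mid xyxy=yxyx\rangle$ cannot be extracted from Lemma~\ref{lem:centraliser-valence2} plus ``preserving the product decomposition'': that only bounds $C(g)$ inside some stabiliser. The actual computation in \cite{V-RigidityLarge} proceeds by first pinning down $g$ up to conjugacy (as a power of an element such as $abcabc$ inside a $(3,3,3)$ standard parabolic) and then computing that centraliser explicitly; the label $4$ comes out of that algebra, not out of a link computation. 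You correctly flag this as the main obstacle, but as written the sketch does not close it.
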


\section{A partial characterisation of standard generators in large-type Artin groups}\label{sec:key_prop}

The goal of this section is to prove the following result:

\begin{prop}\label{prop:char-generators}
	Let $A_\Gamma$ be a large-type Artin group. Then an  element $x \in A_\Gamma$ has a centraliser of the form $\Z\times F_{\geq 2}$ if and only if it is conjugated to a non-trivial power of a standard generator of~$A_\Gamma$ that is neither an isolated vertex  nor a leaf of $\Gamma$  contained in an even-labelled edge. 
 
 (Moreover, we have $C(x) \cong \Z$ if $x$ is an isolated vertex of $\Gamma$, and $C(x) \cong \Z^2$ if~$x$ is a leaf of $\Gamma$  contained in an even-labelled edge.)
\end{prop}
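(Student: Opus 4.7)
My plan is to analyse centralisers through the action of $A_\Gamma$ on its Deligne complex $D_\Gamma$, combining Lemma~\ref{lem:centraliser-large} with Lemma~\ref{LemmaMinset} via an elliptic/hyperbolic dichotomy. Since centralisers are conjugation-invariant, both directions may be argued at the level of conjugacy classes.

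For the forward direction, I would first reduce to $x = a^k$ with $k \neq 0$ and $a$ a standard generator satisfying the hypotheses. In a two-dimensional Artin group one has $C(a^k) = C(a)$ for every $k \neq 0$: this follows from $\mathrm{Fix}(a^k) = \mathrm{Fix}(a) = T_a$ together with the identification of $C(a)$ as the stabiliser of $T_a$ acting compatibly with $\langle a \rangle$. Lemma~\ref{lem:centraliser-large} then gives $C(a) \cong \langle a \rangle \times F$ with $\mathrm{rk}(F) = |E(\overline{\Gamma}_a)| + \mathrm{rk}(\pi_1(\overline{\Gamma}_a))$, and a direct case analysis on $\overline{\Gamma}_a$ shows that the hypotheses on $a$ are exactly what force $\mathrm{rk}(F) \geq 2$, simultaneously yielding the parenthetical identifications of $C(a)$ in the excluded cases.

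For the reverse direction, suppose $C(x) \cong \Z \times F_{\geq 2}$. I would first rule out that $x$ acts hyperbolically on $D_\Gamma$: by Lemma~\ref{LemmaMinset}, if $x$ acts hyperbolically, then $C(x)$ is virtually cyclic (bounded transverse tree), virtually $\Z^2$ (line transverse tree), or isomorphic to the dihedral Artin group $\langle x,y \mid xyxy=yxyx \rangle$ (axis in a standard tree). None of these groups is of the form $\Z \times F_{\geq 2}$, so $x$ must be elliptic.

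An elliptic $x$ fixes a vertex of $D_\Gamma$ and hence, up to conjugation, lies in a standard spherical parabolic, which in the large-type setting is either cyclic $\langle a \rangle$ or dihedral $A_{\{a,b\}}$ with $m_{ab}\geq 3$. The cyclic case yields $x = a^k$, and the parenthetical then forces the required conditions on $a$. The main obstacle is the dihedral case: one must show that no element of $A_{\{a,b\}}$ outside the cyclic sub-parabolics has centraliser of the form $\Z \times F_{\geq 2}$ in $A_\Gamma$. I plan to split this into central and non-central elements. For a central element (a suitable power of the Garside element $\Delta$), the centraliser in $A_\Gamma$ contains all of $A_{\{a,b\}}$; but when $m_{ab}\geq 3$, the subgroup $A_{\{a,b\}}$ does not embed into any $\Z \times F_{\geq 2}$, since the central quotient $A_{\{a,b\}}/Z(A_{\{a,b\}})$ contains torsion --- any homomorphism to $\Z \times F_{\geq 2}$ must therefore have image in the central $\Z$ and be abelian, contradicting the non-abelianness of $A_{\{a,b\}}$. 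For a non-central element that is not conjugate (in $A_{\{a,b\}}$) to a power of a generator, its fixed-point set in $D_\Gamma$ reduces to the vertex stabilised by $A_{\{a,b\}}$, forcing its centraliser to lie inside $A_{\{a,b\}}$ and coincide with a small (typically cyclic) centraliser there, again incompatible with $\Z \times F_{\geq 2}$.
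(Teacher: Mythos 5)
Your overall strategy coincides with the paper's: an elliptic/hyperbolic dichotomy for the action on the Deligne complex, with Lemma~\ref{lem:centraliser-large} giving the forward direction and the parenthetical statements, Lemma~\ref{LemmaMinset} disposing of hyperbolic elements, and a reduction to dihedral spherical parabolics for the remaining elliptic elements. The forward direction and the hyperbolic case are essentially fine (one small inaccuracy: in the bounded-transverse-tree case the centraliser stabilises an axis and is therefore $\Z$ or $\Z^2$ by \cite[Lemma~3.23]{V-RigidityLarge} --- not necessarily virtually cyclic --- but this does not affect the conclusion that it is not of the form $\Z\times F_{\geq 2}$).

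There is, however, a genuine gap in your last step, which is where the paper does most of its work (Lemma~\ref{LemmaCentraliserType2Elements}). For an elliptic element $g$ lying in a dihedral parabolic $A_{ab}$ with $m_{ab}\geq 3$, non-central and not conjugate to a power of a generator, you correctly observe that $\mathrm{Fix}(g)=\{v_{ab}\}$ forces $C(g)\subseteq \mathrm{Stab}(v_{ab})=A_{ab}$, but you then merely assert that the centraliser there is ``small (typically cyclic)''. That assertion is precisely what needs to be proved. The paper does it by passing to the central quotient $A_{ab}/Z(A_{ab})$, which splits as a free product of two cyclic groups, and analysing the action of $\bar g$ on the corresponding Bass--Serre tree: trivial edge stabilisers force $C(\bar g)$ to be cyclic when $\bar g$ is elliptic and to embed in the symmetry group of a line when $\bar g$ is hyperbolic, so $C(g)$ is an extension of a virtually cyclic group by $Z(A_{ab})\cong\Z$ and hence virtually abelian. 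Without an argument of this kind your case analysis is incomplete. A secondary remark: for $g\in Z(A_{ab})$ you prove the stronger claim that $A_{ab}$ does not embed into any $\Z\times F_{\geq 2}$; the underlying idea (the projection to the free factor kills the normal closure of the torsion of $A_{ab}/Z(A_{ab})$, i.e.\ everything, so the image is abelian) can be made rigorous, but your phrasing ``image in the central $\Z$'' is not quite right (the image could be $\Z^2$), and it is simpler to note, as the paper does, that here $C(g)$ equals $A_{ab}$ exactly and that $A_{ab}$ and $\Z\times F_{\geq 2}$ have non-isomorphic abelianisations.
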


\begin{rem} Proposition \ref{prop:char-generators} cannot be extended to two-dimensional Artin groups in general. Indeed, consider the Artin group $A_{\Gamma}$ whose presentation graph is a triangle with coefficients $(m_{ab}, m_{ac}, m_{bc}) = (2, 4, 4)$. It was proved in {\cite[Proposition 3.26, Lemma 4.1]{V-RigidityLarge}} that the element $cbca$ acts hyperbolically on the Deligne complex $D_{\Gamma}$, yet its centraliser is isomorphic to the direct product $\mathbb{Z} \times F_2$.
\end{rem}

The proof of Proposition~\ref{prop:char-generators} will be split into several lemmas that describe the centralisers of elements of $A_\Gamma$. 
The following is a direct consequence of Lemma~\ref{lem:centraliser-large}:

\begin{lem}\label{rem:char_converse} Let $A_{\Gamma}$ be a large-type Artin group, and let $g$ be a standard generator of $\Gamma$. If $g$ corresponds to an isolated vertex of $\Gamma$, then $C(g) \cong \Z$. If $g$ corresponds to a leaf of $\Gamma$  contained in an even-labelled edge, then $C(g) \cong \Z^2$. \qed
	\end{lem}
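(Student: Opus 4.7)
The plan is to invoke Lemma~\ref{lem:centraliser-large} directly and simply evaluate the rank of the free factor $F$ in each of the two configurations listed. Recall that the lemma writes $C(g) = \langle g \rangle \times F$ with
$$\mathrm{rk}(F) = |E(\overline{\Gamma}_g)| + \mathrm{rk}\bigl(\pi_1(\overline{\Gamma}_g)\bigr),$$
so all the work reduces to understanding the connected component $\overline{\Gamma}_g$ of $g$ in the modified graph $\overline{\Gamma}$.

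First I would handle the isolated-vertex case. If $g$ is not incident to any edge of $\Gamma$, then no cutting occurs at $g$, and $\overline{\Gamma}_g$ is a single vertex with no edges. Thus $|E(\overline{\Gamma}_g)| = 0$ and $\pi_1(\overline{\Gamma}_g)$ is trivial, which forces $\mathrm{rk}(F) = 0$, so $F = 1$ and $C(g) = \langle g \rangle \cong \Z$.

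Next I would handle the leaf case. Suppose $g$ is a leaf incident to a unique edge $\{g,h\}$ of even label. By the convention from Lemma~\ref{lem:centraliser-large}, cutting this edge at its midpoint produces two distinct edges of $\overline{\Gamma}$, each joining one original endpoint to a newly inserted midpoint vertex. Since $g$ has no other neighbours in $\Gamma$, the component $\overline{\Gamma}_g$ consists of exactly one of these half-edges, i.e.\ a tree with a single edge. Consequently $|E(\overline{\Gamma}_g)| = 1$ and $\pi_1(\overline{\Gamma}_g)$ is trivial, yielding $\mathrm{rk}(F) = 1$, hence $F \cong \Z$ and $C(g) \cong \Z \times \Z = \Z^2$.

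There is no substantial obstacle here: the statement is essentially a bookkeeping application of the rank formula in two simple configurations. The only subtlety is correctly applying the cutting convention at a leaf, namely that the even-labelled edge splits into \emph{two} half-edges in $\overline{\Gamma}$ (not one, nor a loop), so that the leaf's component in $\overline{\Gamma}$ is a single edge rather than a vertex or a cycle.
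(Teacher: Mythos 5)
Your proposal is correct and is exactly the argument the paper intends: the paper states this lemma as ``a direct consequence of Lemma~\ref{lem:centraliser-large}'' with no further detail, and your computation of $\mathrm{rk}(F)$ in the two configurations (a one-vertex component for an isolated vertex, a single half-edge component for a leaf on an even-labelled edge) is the bookkeeping the authors leave to the reader.
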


\begin{lem} \label{LemmaCentraliserHyperbolicElements}
Let $A_{\Gamma}$ be a large-type Artin group, and let $g \in A_{\Gamma}$ be an element acting hyperbolically on $D_{\Gamma}$. Then $C(g)$ is not of the form $\mathbb{Z} \times F_{\geq 2}$.
\end{lem}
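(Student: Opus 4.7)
The plan is to invoke Lemma~\ref{LemmaMinset}, obtain a trichotomy for the structure of $\mathrm{Min}(g)$, and rule out $C(g) \cong \mathbb{Z} \times F_{\geq 2}$ in each of the three cases. A tool we will use repeatedly is the following observation: the pointwise stabiliser in $A_{\Gamma}$ of any unbounded subset of $D_{\Gamma}$ is trivial, which follows from the acylindricity of the $A_{\Gamma}$-action on $D_{\Gamma}$ proved in \cite{MP-Acylindrical}, combined with the torsion-freeness of the two-dimensional Artin group $A_{\Gamma}$.

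The third case, where $C(g) \cong \langle x, y \mid xyxy = yxyx \rangle$, is handled by an abelianisation count: the sole defining relator is a commutator, so $C(g)^{\mathrm{ab}} \cong \mathbb{Z}^{2}$, whereas $(\mathbb{Z} \times F_{n})^{\mathrm{ab}} \cong \mathbb{Z}^{n+1}$ has rank at least $3$ for every $n \geq 2$.

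In the second case, $\mathrm{Min}(g)$ is a Euclidean plane tiled by equilateral triangles, and $C(g)$ preserves it and acts on it by simplicial automorphisms. By the initial observation this action is faithful, so $C(g)$ embeds in the symmetry group of this triangulated plane, namely the wallpaper group $p6m$, which is virtually $\mathbb{Z}^{2}$. This contradicts the fact that $\mathbb{Z} \times F_{\geq 2}$ is not virtually abelian.

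The first case, where the transverse tree $\mathcal{T}$ is bounded, is the most delicate. The CAT(0) circumcentre lemma applied to the bounded complete CAT(0) subspace $\mathcal{T}$ produces a point $* \in \mathcal{T}$ fixed by all of $C(g)$, so $C(g)$ preserves the single line $L = \{*\} \times \mathbb{R} \subset \mathrm{Min}(g)$, and the action on $L$ defines a homomorphism $\rho \colon C(g) \to \mathrm{Isom}(L) \cong \mathbb{R} \rtimes \mathbb{Z}/2$. Its kernel pointwise-stabilises the unbounded set $L$, hence is trivial by the initial observation. Thus $\rho$ embeds $C(g)$ into the metabelian group $\mathrm{Isom}(L)$, contradicting the fact that $\mathbb{Z} \times F_{\geq 2}$ contains a non-abelian free subgroup and is therefore not metabelian. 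The main step to confirm carefully is the initial triviality observation (and in case two its application to a plane rather than a line); once that is in hand, the rest of the argument is essentially formal.
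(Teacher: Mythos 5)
Your overall strategy (apply Lemma~\ref{LemmaMinset} and rule out $\Z\times F_{\geq 2}$ case by case) is the paper's, and your Cases 2 and 3 reach the right conclusions. But the ``initial observation'' on which Case 1 (and your justification of faithfulness in Case 2) rests is false: the pointwise stabiliser of an unbounded subset of $D_\Gamma$ need \emph{not} be trivial. A standard tree $T_a=\Fix(a)$ is in general an unbounded subtree of $D_\Gamma$ whose pointwise stabiliser contains the infinite cyclic group $\langle a\rangle$; for the same reason the action of $A_\Gamma$ on $D_\Gamma$ is not acylindrical, and the acylindricity result of \cite{MP-Acylindrical} concerns the action on a \emph{coned-off} Deligne complex (and only for Artin groups of hyperbolic type, which excludes the $(3,3,3)$-triangles that a large-type graph may contain). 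This is not a hypothetical worry for your Case 1: when the bounded transverse tree forces $C(g)$ to preserve a single axis $L$, that axis may well be contained in a standard tree (this is exactly the situation singled out in \cite[Lemma 3.23]{V-RigidityLarge}, which the paper cites at this point), and then the kernel of your map $\rho\colon C(g)\to \mathrm{Isom}(L)$ is infinite cyclic, not trivial, so $\rho$ is not an embedding.

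The damage is local and repairable. For Case 2, faithfulness follows simply because the plane is a subcomplex containing $2$-simplices, and $2$-simplices of $D_\Gamma$ have a vertex of the form $gA_\emptyset$ with trivial stabiliser. For Case 1, observe instead that $L$ meets either the interior of a $2$-simplex (pointwise stabiliser trivial) or an edge (pointwise stabiliser trivial or infinite cyclic), so $\ker\rho$ is cyclic; then $C(g)$ is cyclic-by-(subgroup of $\R\rtimes\Z/2$), hence solvable, and therefore not of the form $\Z\times F_{\geq 2}$ since the latter contains a non-abelian free group. Alternatively, do what the paper does and quote \cite[Lemma 3.23]{V-RigidityLarge}: the stabiliser of an axis of a hyperbolic element is $\Z$ or $\Z^2$. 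As written, though, your proof invokes a false lemma at the step you yourself flag as the one needing confirmation, so it does not stand.
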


\begin{proof}
By Lemma \ref{LemmaMinset} there are three possibilities. If the transverse-tree $\mathcal{T}$ is bounded, then it has a (unique) centre that is necessarily fixed by $C(g)$. In particular, $C(g)$ stabilises an axis of $g$. We know from {\cite[Lemma 3.23]{V-RigidityLarge}} that the stabiliser of an axis of a hyperbolic element is either isomorphic to $\mathbb{Z}$ or to $\mathbb{Z}^2$, depending on whether that axis is contained in a standard tree. Thus, $C(g)$ is not of the form $\mathbb{Z} \times F_{\geq 2}$.

If $Min(g)$ is an Euclidean plane tiled by equilateral triangles, then $C(g)$ acts on this plane simplicially. This action is faithful since triangle stabilisers are trivial, hence $C(g)$ is contained in the symmetry group of this tiling, which is virtually $\mathbb{Z}^2$. In particular, $C(g)$ is not of the form $\mathbb{Z} \times F_{\geq 2}$.

Finally, if $C(g)$ is isomorphic to the dihedral Artin group $\langle x, y \ | \ xyxy = yxyx \rangle$, then it is not of the form $\mathbb{Z} \times F_{\geq 2}$ since these groups have non-isomorphic abelianisations.
\end{proof}

\begin{lem} \label{LemmaCentraliserType2Elements}
Let $A_{\Gamma}$ be a large-type Artin group, and let $g \in A_{\Gamma}$ be an element acting elliptically on $D_{\Gamma}$, but that is not conjugated to a non-trivial power of a standard generator of $A_{\Gamma}$. Then $C(g)$ is not of the form~$\Z\times F_{\geq 2}$.
\end{lem}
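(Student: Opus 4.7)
The plan is to first locate $g$ inside a dihedral standard parabolic subgroup, then to show that its fixed-point set in $D_\Gamma$ reduces to a single vertex, and finally to use the structure of the resulting vertex stabiliser to constrain $C(g)$. Since $g$ acts elliptically on $D_\Gamma$, it fixes a vertex, and so is contained in a conjugate of a spherical-type standard parabolic subgroup. In the two-dimensional Artin group $A_\Gamma$, such parabolics are trivial, cyclic of the form $\langle a\rangle$, or dihedral of the form $A_{ab}$. Using that $g\neq 1$ and that $g$ is not conjugate to a power of any standard generator, after a conjugation (which preserves $C(g)$ up to isomorphism) I may assume that $g\in A_{ab}$ for some edge $ab$ of $\Gamma$, with $g\notin\langle a\rangle\cup\langle b\rangle$.

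The next step is to show that the fixed-point set $\Fix(g)$ is exactly $\{v_{ab}\}$, where $v_{ab}$ denotes the vertex of $D_\Gamma$ corresponding to the coset $A_{ab}$. I would rule out all other fixed points by inspecting stabilisers of simplices: a vertex corresponding to the trivial subgroup is fixed only if $g=1$, a vertex of the form $h\langle c\rangle$ is fixed only if $g$ is conjugate to a power of $c$, and a point in the interior of a higher-dimensional simplex forces $g$ to lie in that simplex's stabiliser, which is either trivial or cyclic of the form $h\langle c\rangle h^{-1}$---all situations contradicting the hypothesis. Hence $\Fix(g)$ consists only of vertices corresponding to cosets of standard dihedral parabolic subgroups; since $\Fix(g)$ is convex in the CAT$(0)$ complex $D_\Gamma$ and since a CAT$(0)$ geodesic between two such vertices must pass through interiors of simplices already excluded, we obtain $\Fix(g)=\{v_{ab}\}$, and consequently $C(g)\leq \St(v_{ab})=A_{ab}$.

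The final step is to show that $C_{A_{ab}}(g)\not\cong \Z\times F_{\geq 2}$. The large-type dihedral Artin group $A_{ab}$ has infinite cyclic centre $Z$, and the quotient $\overline{A}_{ab}:=A_{ab}/Z$ is a free product of cyclic groups (isomorphic to $\Z_{2}*\Z_{m_{ab}}$ when $m_{ab}$ is odd and to $\Z*\Z_{m_{ab}/2}$ when $m_{ab}$ is even), hence a non-elementary virtually free group. If $g$ is central, then $C_{A_{ab}}(g)=A_{ab}$, which is not $\Z\times F_{\geq 2}$ by a comparison of abelianisations ($\Z$ or $\Z^2$ depending on the parity of $m_{ab}$, never $\Z^{k+1}$ for $k\geq 2$). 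Otherwise the image $\bar g\in\overline{A}_{ab}$ is non-trivial, and a Bass--Serre analysis on the splitting tree of $\overline{A}_{ab}$ shows that $C_{\overline{A}_{ab}}(\bar g)$ is virtually cyclic (stabilising an axis if $\bar g$ has infinite order, and lying inside a finite vertex stabiliser if $\bar g$ is torsion); pulling back along the central extension $1\to Z\to A_{ab}\to \overline{A}_{ab}\to 1$, the centraliser $C_{A_{ab}}(g)$ is virtually abelian of rank at most~$2$, and in particular cannot contain a non-abelian free subgroup. The most delicate step is the convexity argument reducing $\Fix(g)$ to a single vertex: each individual exclusion is routine, but one must carefully track how a CAT$(0)$ geodesic between two candidate fixed vertices traverses the simplices of $D_\Gamma$; the rest of the argument then flows cleanly from the structure of large-type dihedral Artin groups.
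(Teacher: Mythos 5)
Your proposal is correct and follows essentially the same route as the paper: reduce to $g\in A_{ab}$ with $\mathrm{Fix}(g)=\{v_{ab}\}$ so that $C(g)\leq A_{ab}$, dispose of the central case by comparing abelianisations, and otherwise pass to the central quotient and use Bass--Serre theory on its free-product splitting to see that $C(g)$ is virtually abelian. The only difference is that you spell out the convexity argument pinning down $\mathrm{Fix}(g)$, which the paper simply asserts; your elaboration is sound.
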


\begin{proof}
Up to conjugation, $g$ belongs to a dihedral Artin subgroup $A_{ab}$ for some standard generators $a, b \in V(\Gamma)$ satisfying $m_{ab} < \infty$. Since $g$ is not conjugated to a non-trivial power of a standard generator, the fixed-point set $Fix(g)$ is the single vertex $v_{ab}$. The centraliser $C(g)$ preserves that vertex, hence $C(g)$ is contained in $Stab(v_{ab}) = A_{ab}$.

 If $g$ belongs to the centre of $A_{ab}$, then $C(g)$ is precisely $A_{ab}$, which is not of the form $\mathbb{Z} \times F_{\geq 2}$ as these groups have different abelianisations. Let us now assume that $g \notin Z(A_{ab})$. Recall that the centre of a large-type dihedral Artin group is infinite cyclic (\cite{BS-centre}), and consider the central quotient
$ A_{ab}/Z(A_{ab}).$
It is well-known (see for instance \cite[Section 2]{CHR-equations}) that this quotient splits as a free product of two cyclic groups. We consider the Bass-Serre tree $T$ corresponding to that splitting. The element $g$ defines a non-trivial element $\bar{g}$ of $ A_{ab}/Z(A_{ab})$. We have an exact sequence of the form
$$1 \rightarrow Z(A_{ab}) \rightarrow C(g) \rightarrow C(\bar{g}),$$
so we can study the centraliser of $\bar g$ in $ A_{ab}/Z(A_{ab})$.  There are two cases to consider:
\smallskip

\noindent \underline{Case 1}: If $\bar{g}$ acts elliptically on $T$, then $\bar{g}$ has a non-trivial fixed set $Fix(\bar{g})$. Since  stabilisers of edges of $T$ are trivial, $Fix(\bar g)$ must be a single vertex. Note that $C(\bar{g})$ must be contained in the stabiliser of this vertex, hence it is cyclic.
\smallskip

\noindent \underline{Case 2}: If $\bar{g}$ acts hyperbolically on $T$, then $\bar{g}$ has a unique axis in $T$, and this axis is stabilised by $C(\bar{g})$. The stabilisers of edges  being trivial, $C(\bar{g})$ acts faithfully on that simplicial line, hence $C(\bar{g})$ is contained in the symmetry group of a simplicial line, which is virtually $\mathbb{Z}$. 
\medskip

By the above, we know that $C(\bar{g})$ is virtually cyclic. In particular, it follows that~$C(g)$ is an extension of a virtually cyclic group (a suitable subgroup of $C(\bar{g})$) by the infinite cyclic group $Z(A_{ab})$, hence it is virtually abelian. Thus, $C(g)$ is not of the form $\Z\times F_{\geq 2}$.
\end{proof}

\begin{proof}[Proof of Proposition~\ref{prop:char-generators}]
By Lemmas \ref{LemmaCentraliserHyperbolicElements} and \ref{LemmaCentraliserType2Elements}, every element that is not conjugated to a non-trivial power of a standard generator has a centraliser that is not of the form $\Z\times F_{\geq 2}$.
\end{proof}

\begin{rem}
While this will not be used in the rest of the article, we mention the following complete classification of the centralisers of elements of $A_\Gamma$, which can be derived from the previous proofs:
\\(1) If $g$ acts elliptically on $D_{\Gamma}$ and $Fix(g)$ is the standard tree $T \coloneqq Fix(a)$, then:
\begin{itemize}
    \item If $a$ is an isolated vertex, then $C(g) \cong \mathbb{Z}$.
    \item If $a$ is the tip of an even-labelled leaf, then $C(g) \cong \mathbb{Z}^2$.
    \item Otherwise, $C(g) \cong \mathbb{Z} \times F$ for some finitely generated non-abelian free group~$F$.
\end{itemize}
(2) If $g$ acts elliptically on $D_{\Gamma}$ and $Fix(g)$ is a single vertex $v$ with stabiliser $A_{ab}$, then:
\begin{itemize}
    \item If $g \in Z(A_{ab})$, then $C(g)$ is a large-type dihedral Artin group. (e.g. $ababab$ in the $(3, 3, 3)$ Artin group.)
    \item If $g \notin Z(A_{ab})$ but $g^n \in Z(A_{ab})$ for some $n \neq 0$, then $C(g) \cong \mathbb{Z}$. (e.g. $ab$ in the $(3, 3, 3)$ Artin group.)
    \item If $g^n \notin Z(A_{ab})$ for any $n \neq 0$, then $C(g) \cong \mathbb{Z}^2$. (e.g. $ab^{-1}$ in the $(3, 3, 3)$ Artin group.)
\end{itemize}
(3) If $g$ acts loxodromically on $D_{\Gamma}$, then:
\begin{itemize}
    \item If $g$ has a bounded tranverse-tree, and some axis of $g$ is contained in a standard tree of $D_\Gamma$, 
    then $C(g) \cong \mathbb{Z}^2$. (e.g. $b^n abcabc$ for $n \neq 0$, in the $(3, 3, 3)$ Artin group.)
    \item If $g$ has a bounded tranverse-tree, and no axis of $g$ is contained in a standard tree of $D_\Gamma$, 
    then $C(g) \cong \mathbb{Z}$.
    (e.g. $(ab^{-1})^n (cb^{-1})^n$ for $n$ large, in the $(3, 3, 3)$ Artin group.)
    \item If $g$ has an unbounded tranverse-tree and has axes that are contained in standard trees, $C(g) \cong \langle x, y \ | \ xyxy = yxyx \rangle$. (e.g. $abcabc$ in the $(3, 3, 3)$ Artin group.)
    \item If $g$ has an unbounded tranverse-tree but has no axis that is contained in a standard tree, then $C(g) \cong \mathbb{Z}^2$. (e.g. $babc$ in the $(3, 3, 3)$ Artin group.)
\end{itemize}
\end{rem}

\section{A characterisation of large-type Artin groups}\label{sec:proof_thmA}

In this section, we finally prove Theorem A by introducing  properties that tell apart large-type Artin groups from Artin groups associated to presentation graphs that contain an edge labelled $2$. Since the differentiating properties will depend on the type of edge labelled $2$ we are considering, we introduce the following definition:

\begin{defin}
	Let $\Gamma$ be a presentation graph. We say that an edge $e$ of $\Gamma$ is \textbf{isolated} if both vertices of $e$ have valence $1$, is an \textbf{inner} edge if both vertices of $e$ have valence at least $2$, and is an \textbf{outer} edge if exactly one vertex of $e$ has valence~$1$.  
\end{defin}

\paragraph{The case of isolated edges with label 2.} We first introduce the differentiating property for graphs that contain an isolated edge labelled $2$.

\begin{defin}\label{def:isolated}
	Let $G$ be a group, and let $H$ be a subgroup of $G$ isomorphic to~$\Z^2$. We say that $H$ is an \textbf{isolated} $\Z^2$ if for every subgroup $H'$ of $G$ isomorphic to~$\Z^2$, we either have $H' \subseteq H$ or $H \cap H' = \{1\}$. 
	\end{defin}

\begin{lem}\label{lem:2d-isolated}
	Let $A_\Gamma$ be an Artin group, and assume that $\Gamma$ contains an isolated edge labelled $2$. Then $A_\Gamma$ contains an isolated $\Z^2$.
\end{lem}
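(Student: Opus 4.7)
The plan is as follows. Since the edge $e = \{a,b\}$ with $m_{ab} = 2$ is isolated, both $a$ and $b$ have valence $1$ in $\Gamma$, so they are adjacent only to each other. Hence $\Gamma$ is the disjoint union of the edge $e$ and $\Gamma' := \Gamma \setminus \{a,b\}$, which (by van der Lek) yields a free product decomposition
$$ A_\Gamma \;=\; A_{ab} \, * \, A_{\Gamma'}. $$
Since $m_{ab} = 2$, the subgroup $H := A_{ab} = \langle a, b \mid ab=ba\rangle$ is isomorphic to $\Z^2$. I claim that this $H$ is isolated in the sense of Definition~\ref{def:isolated}.

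Consider the Bass--Serre tree $T$ of the above free product: vertex stabilisers are conjugates of $H$ and of $A_{\Gamma'}$, and edge stabilisers are trivial. Let $H' \leq A_\Gamma$ with $H' \cong \Z^2$ and $H' \cap H \neq \{1\}$; I need to show $H' \subseteq H$. The first step is to show that $H'$ has a global fixed point in $T$. Suppose for contradiction that some element $g \in H'$ acts hyperbolically on $T$, with axis $\ell$. Since $H'$ is abelian, $H'$ commutes with $g$ and therefore preserves $\ell$; the kernel of the action of $H'$ on $\ell$ fixes an edge of $T$ and hence is trivial. Thus $H'$ embeds into the isometry group of the simplicial line $\ell$, which is virtually $\Z$, contradicting $H' \cong \Z^2$. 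Hence every element of $H'$ is elliptic; by Serre's lemma applied to any generating pair of $H'$, we conclude that $H'$ fixes a vertex $v$ of $T$.

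Thus $H' \subseteq \St(v)$, where $\St(v)$ is a conjugate of either $H$ or $A_{\Gamma'}$. Let $v_H$ be the vertex of $T$ with stabiliser $H$. Since $H' \cap H$ is non-trivial, some non-trivial element $h$ of $A_\Gamma$ fixes both $v$ and $v_H$. As edge stabilisers of $T$ are trivial, $h$ cannot fix two distinct vertices, so $v = v_H$ and $H' \subseteq \St(v_H) = H$, as required.

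The argument is essentially a direct Bass--Serre-theoretic computation; the only mildly delicate step is ruling out hyperbolic elements in $H'$, and this is handled by the standard observation that an abelian group acting on a tree with trivial edge stabilisers and containing a hyperbolic element must be virtually cyclic. Everything else reduces to the triviality of edge stabilisers in the free product splitting.
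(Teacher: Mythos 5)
Your proof is correct and follows essentially the same route as the paper: the same free product splitting $A_\Gamma = A_{ab} * A_{\Gamma'}$, the same Bass--Serre tree with trivial edge stabilisers, and the same final step using triviality of edge stabilisers to pin down the fixed vertex. The only (harmless) difference is that you establish the global fixed point for $H'$ by hand (ruling out hyperbolic elements via abelianness and then applying Serre's lemma), whereas the paper invokes the dichotomy that a non-virtually-cyclic group acting acylindrically on a tree either contains $F_2$ or fixes a point.
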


\begin{proof}
Let $\Gamma'$ be an isolated edge labelled $2$, and let $\Gamma'' \coloneqq \Gamma - \Gamma'$. By a result of van der Lek \cite{vdL}, the parabolic subgroup $A_{\Gamma'}$ is isomorphic to $\Z^2$. Let us show that this subgroup is an isolated $\Z^2$. 

Since $\Gamma'$ is a connected component of $\Gamma$, we have a free splitting $A_\Gamma = A_{\Gamma'} * A_{\Gamma''}$. Consider the acylindrical action (without inversion) of $A_\Gamma$ on the Bass--Serre tree $T$ corresponding to that splitting, and let $v$ be the (unique) vertex of $T$ whose stabiliser is exactly $A_{\Gamma'}$. Let $H$ be a subgroup isomorphic to $\Z^2$ that intersects $A_{\Gamma'}$ non-trivially. Since a non virtually cyclic group acting acylindrically on a tree either contains $F_2$ or fixes a point, and $H$ is isomorphic to $\Z^2$ by assumption,  it follows that $H$ is contained in a vertex stabiliser. Since edge stabilisers are trivial, there exists a vertex $w$ of $T$ such that the fixed-point set of every non-trivial element of $H$ is exactly $w$. Since $H$ intersects $A_{\Gamma'} = \mbox{Stab}(v)$ non-trivially, it follows that~$v=w$, hence $H \subset A_{\Gamma'}$.
\end{proof}

\begin{lem}\label{lem:large-isolated}
	Let $A_\Gamma$ be a large-type Artin group. Then $A_\Gamma$ does not contain an isolated $\Z^2$-subgroup.
\end{lem}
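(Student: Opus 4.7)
The plan is to take any $\Z^2$-subgroup $H\subseteq A_\Gamma$ and exhibit a $\Z^2$-subgroup $H'\not\subseteq H$ with $H\cap H'\neq\{1\}$. The central mechanism is: if some $h\in H\setminus\{1\}$ has a \emph{non-abelian} centraliser $C(h)$, then $h$ lies in the centre of $C(h)$, and for any two non-commuting $u,v\in C(h)$ that are independent from $h$ modulo roots, the subgroups $\langle u,h\rangle$ and $\langle v,h\rangle$ are both isomorphic to $\Z^2$; they cannot both lie in the abelian group $H$, so at least one furnishes the desired~$H'$. The task is thus to locate such an $h\in H$, which I would do by analysing the action of $H$ on $D_\Gamma$ and invoking the centraliser classification of Section~\ref{sec:key_prop}.

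The first case is when $H$ fixes a vertex of $D_\Gamma$. Vertex stabilisers are conjugates of spherical parabolic subgroups, and in a large-type (hence two-dimensional) Artin group these are trivial, infinite cyclic, or a large-type dihedral $A_{ab}$ with $m_{ab}\geq 3$. So, up to conjugation, $H\subseteq A_{ab}$. By the argument used in the proof of Lemma~\ref{LemmaCentraliserType2Elements}, $A_{ab}/Z(A_{ab})$ splits as a free product of cyclic groups whose abelian subgroups are all cyclic, so $H$ meets $Z(A_{ab})=\langle z_{ab}\rangle$ non-trivially; pick $z_{ab}^k\in H$ with $k\geq 1$. The mechanism applies to $h=z_{ab}^k$, since $z_{ab}$ is central in the non-abelian group $A_{ab}$; concretely, $\langle a,z_{ab}\rangle$ and $\langle b,z_{ab}\rangle$ are two $\Z^2$-subgroups of $A_{ab}$ containing $z_{ab}^k$, and if both were contained in $H$ then $a$ and $b$ would commute in~$H$, contradicting $m_{ab}\geq 3$.

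The remaining case is when $H$ does not fix a vertex; the flat torus theorem then provides a $H$-stable Euclidean flat in $D_\Gamma$, of dimension $1$ or~$2$. If the flat is one-dimensional, exactly one generator of $H$ acts trivially on it, hence is elliptic with fixed set equal to some standard tree $T_x$, and is consequently a non-zero power of~$x$. The mechanism then applies via $C(x)$, which is either $\Z\times F_{\geq 2}$ (Lemma~\ref{lem:centraliser-large}) with abundant $\Z^2$-subgroups through $\langle x\rangle$, or is $\Z^2$ when $x$ is an even-labelled leaf, in which case we reduce to the dihedral argument using $\langle y,z_{xy}\rangle$ as before. The main obstacle is the genuinely two-dimensional subcase, where $H$ stabilises a Euclidean plane~$P\subseteq D_\Gamma$. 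By~\cite{MP-Abelian} such a plane comes from a Euclidean-type standard parabolic, which in large-type must be a $(3,3,3)$-parabolic~$A_{abc}$ (the other affine triangle groups $\tilde B_2$ and $\tilde G_2$ involve a label~$2$). Consider $g := (abc)^2 = abcabc$: by Lemma~\ref{LemmaMinset} and the example following Proposition~\ref{prop:char-generators}, $C(g)\cong\langle x,y\mid xyxy=yxyx\rangle$ is a type-$4$ dihedral Artin group, which is non-abelian and has $g$ central. A direct computation using the relations $bab=aba$, $aca=cac$, $bcb=cbc$ in succession shows
$$babc\cdot abcabc = abac\cdot abcabc = abcacbcabc = abcabc\cdot babc,$$
so $g$ commutes with the translation $babc$ of~$P$, hence with every translation of~$P$, and therefore $g$ lies in the translation $\Z^2$ of~$P$; since $H$ has finite index there, some power $g^N$ lies in~$H$. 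The mechanism now finishes the proof: $\langle x,g^N\rangle$ and $\langle y,g^N\rangle$ are two $\Z^2$-subgroups of $C(g^N)\supseteq C(g)$ both containing $g^N\in H$, and they cannot both be contained in~$H$, since otherwise $x$ and $y$ would commute in~$C(g)$.
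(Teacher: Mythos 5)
Your central mechanism is sound, and it is essentially the ``Fact'' that the paper states and leaves to the reader: a $\Z^2$-subgroup $H$ meeting non-trivially a subgroup in which some $h\in H\setminus\{1\}$ is central and which contains two non-commuting elements independent from $h$ cannot be isolated. Your vertex-stabiliser case is also correct, and your central-quotient argument there (showing $H\cap Z(A_{ab})\neq\{1\}$ via the free product decomposition of $A_{ab}/Z(A_{ab})$, exactly as in the proof of Lemma~\ref{LemmaCentraliserType2Elements}) is a perfectly good, if slightly different, substitute for the paper's observation that $A_{ab}$ is virtually $\Z\times F_{\geq 2}$.

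The gap is in the hyperbolic case. The Flat Torus Theorem does not apply here: the action of $A_\Gamma$ on $D_\Gamma$ is not proper (vertex stabilisers are infinite), so a $\Z^2$-subgroup with a hyperbolic element need not preserve any Euclidean flat of dimension one or two, and your trichotomy is not justified. Concretely, the third bullet of Lemma~\ref{LemmaMinset} produces hyperbolic elements $g_0$ whose transverse tree is an unbounded tree that is not a line and whose axes lie in standard trees, with $C(g_0)\cong\langle x,y\mid xyxy=yxyx\rangle$; a $\Z^2$-subgroup $H\leq C(g_0)$ of this kind need not stabilise any line with infinite cyclic kernel, nor any plane (the group $C(g_0)$ is virtually $\Z\times F_2$, not virtually $\Z^2$), so it falls outside both of your subcases. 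This configuration is precisely Case~3 of the paper's proof, which is reached not through a flat but through the classification of $\Z^2$-subgroups in \cite{MP-Abelian}*{Theorem~B}; that citation is what your argument is implicitly re-deriving and where it comes up short. (The missing case can be repaired by the same central-quotient argument you use for dihedral stabilisers, applied to $C(g_0)$.) Separately, in your two-dimensional subcase several steps are asserted rather than proved: commuting with the single translation $babc$ does not by itself show that $abcabc$ ``commutes with every translation of $P$'' or that it lies in the translation lattice of $P$, and the claim that the image of $H$ has finite index in that lattice needs the injectivity of $\mathrm{Stab}(P)\to\mathrm{Sym}(P)$ together with an argument that $H$ meets the translation subgroup in a rank-two subgroup. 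All of these can be fixed, but as written the proof of the key containment $g^N\in H$ is incomplete; note that it too follows more cleanly from the central-quotient argument inside $C(abcabc)$ once one knows, from \cite{MP-Abelian}, that $H$ is conjugate into that centraliser.
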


\begin{proof}
	Let $H$ be a subgroup of $A_\Gamma$ isomorphic to $\Z^2$. We start by mentioning the following straightforward fact, which is left to the reader:
	
	\medskip
	
	\paragraph{Fact.} If $H$ is virtually contained in a subgroup of $A_\Gamma$ of the form $\Z \times F_{\geq 2}$, then $H$ is not an isolated~$\Z^2$-subgroup.

	\medskip
	
	The $\Z^2$-subgroups of a large-type Artin group were classified in \cite[Theorem~B]{MP-Abelian}. In particular, up to conjugation, there are three cases to consider: 

 \smallskip
	
	\underline{Case 1}: Suppose that $H$ is contained in a (large-type) dihedral parabolic subgroup generated by two adjacent vertices $a, b$ of $\Gamma$. Since $m_{ab} \geq 3$, it is a standard result that such dihedral Artin groups are virtually of the form $\Z \times F_{\geq 2}$ (see for instance \cite[Lemma~2.3]{HMS-Artin}). In particular, $H$ is virtually contained in a subgroup of $A_\Gamma$ of the form $\Z \times F_{\geq 2}$, hence it is not an isolated $\Z^2$-subgroup by the above Fact.

 \smallskip
	
	\underline{Case 2}: Suppose that $H$ is contained in the centraliser $C(a)$ of a standard generator $a \in V(\Gamma)$. Note that $a$ cannot be an isolated vertex since such elements have an infinite cyclic centraliser by Remark~\ref{rem:char_converse}. If $a$ is not a valence $1$ vertex of $\Gamma$ contained in an even-labelled edge, then the centraliser $C(a)$ is of the form $\Z\times F_{\geq 2}$ by Lemma~\ref{lem:centraliser-valence2}, and it follows from the above Fact that $H$ is not an isolated $\Z^2$. If  $a$ is a valence $1$ vertex of $\Gamma$ adjacent to a vertex $b \in V(\Gamma)$ with $m_{ab}$ even, then it follows from  \cite[Lemma~4.3]{MP-Acylindrical} that the standard tree $T_a$ is bounded, with circumcentre the vertex $v_{ab}$ corresponding to the coset $A_{ab}$ of $A_\Gamma$ (see also \cite[Remark~2.17]{V-RigidityLarge} for an explicit description of $T_a$). Since $C(a)$ stabilises the standard tree $T_a$, it also  fixes the vertex $v_{ab}$, hence $H$ is contained in the large-type dihedral parabolic $A_{ab}$, and we are back to Case 1.

 \smallskip
	
	\underline{Case 3}: There exist vertices $a, b, c$ of $\Gamma$ with $m_{ab} = m_{bc} = m_{ac} = 3$ and such that $H \subset C(abcabc)$. In that case, it follows from \cite[Lemma~4.3]{V-RigidityLarge} that $C(abcabc)$ is isomorphic to the dihedral Artin group $\langle x, y ~|~ xyxy=yxyx \rangle$. Thus, $H$ is contained in a (large-type) dihedral  Artin group, which is virtually isomorphic to $\Z \times F_{\geq 2}$ by the above. It now follows from the above Fact that $H$ is not an isolated $\Z^2$-subgroup.
\end{proof}

Since having an isolated $\Z^2$-subgroup is invariant under isomorphism, we get:

\begin{cor}\label{cor:isolated}
	If $A_\Gamma$ is an Artin group such that $\Gamma$ contains an isolated edge labelled $2$, then $A_\Gamma$  is not isomorphic to a large-type Artin group. \qed.
\end{cor}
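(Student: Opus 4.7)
The plan is to combine Lemma \ref{lem:2d-isolated} and Lemma \ref{lem:large-isolated} via the observation that the property of containing an isolated $\Z^2$-subgroup is formulated purely in terms of abstract subgroup inclusions and intersections (see Definition~\ref{def:isolated}), and is therefore preserved by any group isomorphism.

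More precisely, I would argue as follows. Suppose $\Gamma$ contains an isolated edge labelled $2$. By Lemma \ref{lem:2d-isolated}, the Artin group $A_\Gamma$ contains an isolated $\Z^2$-subgroup $H$. Now suppose for contradiction that there is a group isomorphism $\varphi\colon A_\Gamma \to A_{\Gamma'}$ with $A_{\Gamma'}$ a large-type Artin group. Since $\varphi$ sends $\Z^2$-subgroups to $\Z^2$-subgroups bijectively and preserves both inclusion and trivial intersection, $\varphi(H)$ is an isolated $\Z^2$-subgroup of $A_{\Gamma'}$. This contradicts Lemma \ref{lem:large-isolated}, which asserts that no large-type Artin group contains such a subgroup.

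There is essentially no obstacle here: all the substantive work has already been carried out in Lemmas~\ref{lem:2d-isolated} and~\ref{lem:large-isolated}, and the invariance of the property under isomorphism is immediate from Definition~\ref{def:isolated}. The only thing worth writing explicitly is the brief verification that the defining condition of an isolated $\Z^2$ transports across an isomorphism, which justifies the final sentence of the statement that $A_\Gamma$ cannot be isomorphic to \emph{any} large-type Artin group.
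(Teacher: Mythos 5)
Your proposal is correct and matches the paper exactly: the paper derives the corollary from Lemma~\ref{lem:2d-isolated} and Lemma~\ref{lem:large-isolated} with the single remark that containing an isolated $\Z^2$-subgroup is invariant under isomorphism. Nothing further is needed.
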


\paragraph{The case of outer edges with label $2$.} We  introduce the differentiating property for presentation graphs that contain an outer edge labelled $2$.

\begin{defin}
	Let $A_\Gamma$ be an Artin group. We say that $A_\Gamma$ satisfies property~$(P_1)$ if for every element $x\in A_\Gamma$ whose centraliser $C(x)$ is of the form $\Z \times F_{\geq 2}$,  the centraliser $C(x)$ is contained in the normal closure of the set $\mathcal{S}_{\mathrm{dih}}$ of subgroups of $A_\Gamma$ isomorphic to non-abelian dihedral Artin groups. 
\end{defin}

We emphasise that in the previous definition, the subgroups of $A_\Gamma$ belonging to~$\mathcal{S}_{\mathrm{dih}}$ are not required to be parabolic subgroups.

\begin{lem}\label{lem:large-outer}
	Let $A_\Gamma$ be a large-type Artin group. Then $A_\Gamma$  satisfies property~$(P_1)$.
\end{lem}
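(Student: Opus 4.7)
The plan is to combine Proposition~\ref{prop:char-generators} with the explicit description of the centralisers of standard generators recalled in Lemma~\ref{lem:centraliser-large} to exhibit every centraliser of the form $\Z\times F_{\geq 2}$ as generated, up to conjugation, by elements manifestly contained in non-abelian dihedral parabolic subgroups.

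First, I would apply Proposition~\ref{prop:char-generators} to reduce the problem to the following situation: given a standard generator $a$ of $A_\Gamma$ that is neither isolated nor a leaf contained in an even-labelled edge, and an integer $k\neq 0$, show that the centraliser $C(a^k)$ is contained in the normal closure $N$ of $\mathcal{S}_{\mathrm{dih}}$. Since $N$ is normal in $A_\Gamma$, and $C(x)=g C(a^k) g^{-1}$ whenever $x=g a^k g^{-1}$, this reduction is legitimate.

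Next, I would establish the equality $C(a^k)=C(a)$, which in the large-type setting should follow from the analysis of fixed sets on the Deligne complex carried out in~\cite{MP-Acylindrical}: one verifies that $\Fix(a^k)=T_a=\Fix(a)$, either via a root-uniqueness property of large-type Artin groups or directly from the structure of parabolic subgroups as stabilisers of simplices of $D_\Gamma$. Once this is secured, I would invoke the explicit description of $C(a)$ given in~\cite[Remark~4.6]{MP-Acylindrical} and~\cite[Figure~1]{CMV-Systolic}: it exhibits $C(a)$ as generated by $a$ together with elements drawn from the subgroups $C_{A_{ab}}(a)\subseteq A_{ab}$ for each vertex $b$ adjacent to $a$ in $\Gamma$ (loop generators coming from the cycles of $\overline{\Gamma}_a$ are products of such elements). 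Since $a$ is not isolated, there is at least one such $b$, and the large-type hypothesis $m_{ab}\geq 3$ then ensures that $A_{ab}$ is a non-abelian dihedral Artin subgroup belonging to $\mathcal{S}_{\mathrm{dih}}$. In particular $a\in A_{ab}\subseteq N$, and every other generator of $C(a)$ lies in some $A_{ab'}$ with $m_{ab'}\geq 3$, hence in $N$. This yields $C(a)\subseteq N$ and completes the argument.

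The main obstacle I anticipate is the identification $C(a^k)=C(a)$, which ultimately rests on a root-uniqueness property for large-type Artin groups (or an equivalent analysis of fixed sets on $D_\Gamma$); once this is in hand, the remaining steps are direct verifications based on the explicit description of $C(a)$ and the large-type hypothesis on the labels of $\Gamma$.
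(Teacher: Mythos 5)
Your overall strategy---reduce to $C(a^k)$ for a standard generator $a$ via Proposition~\ref{prop:char-generators} and the normality of $\langle\langle \mathcal{S}_{\mathrm{dih}}\rangle\rangle$, then exhibit that centraliser as generated by elements lying in non-abelian dihedral parabolic subgroups---is sound in outline, but it hinges on the identity $C(a^k)=C(a)$, and that step is a genuine gap as written. The justification you offer, namely $\mathrm{Fix}(a^k)=T_a=\mathrm{Fix}(a)$ (which is indeed \cite[Corollary~2.16]{HMS-Artin}), only yields $C(a^k)\subseteq \mathrm{Stab}(T_a)$; it does not identify $C(a^k)$ with $C(a)$, since an element preserving $T_a$ and commuting with $a^k$ need not a priori commute with $a$. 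The alternative you mention, uniqueness of $k$-th roots in large-type Artin groups, would close the gap (if $g\in C(a^k)$ then $(gag^{-1})^k=a^k$, forcing $gag^{-1}=a$), but this is a non-trivial fact that is neither proved nor among the results recalled in Section~2, so it cannot simply be invoked. Without one of these inputs, knowing $C(a)\subseteq \langle\langle\mathcal{S}_{\mathrm{dih}}\rangle\rangle$ says nothing about the a priori larger group $C(a^k)$.

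The paper's proof sidesteps all of this. It only records that $a$ is not an isolated vertex of $\Gamma$, so its connected component $\Gamma_1$ contains at least one edge; since $A_\Gamma$ is the free product of the parabolics on the connected components of $\Gamma$, the centraliser $C(x)$ lies in a conjugate of $A_{\Gamma_1}$, and $A_{\Gamma_1}$ is already generated by its edge parabolics $A_{bc}$, each of which is a non-abelian dihedral Artin group because $m_{bc}\geq 3$. Hence $C(x)\subseteq\langle\langle\mathcal{S}_{\mathrm{dih}}\rangle\rangle$ with no structure theory of centralisers needed beyond Proposition~\ref{prop:char-generators}. If you wish to keep your finer approach, you would need to either establish $C(a^k)=C(a)$ (or at least $C(a^k)\subseteq \langle\langle\mathcal{S}_{\mathrm{dih}}\rangle\rangle$ directly); otherwise, replacing your second and third steps by this coarser containment gives a complete and much shorter argument.
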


\begin{proof}
	An element $x\in A_\Gamma$ whose centraliser is of the form $\Z \times F_{\geq 2}$ is conjugated to a non-trivial power of a standard generator of $A_\Gamma$ by Proposition~\ref{prop:char-generators}, and it follows from Remark~\ref{rem:char_converse} that this standard generator is not an isolated vertex of $\Gamma$. Let~$\Gamma_1, \ldots, \Gamma_k$ be the connected components of $\Gamma$, with $x$ contained in $\Gamma_1$. Since $A_\Gamma$ splits as the free product of the $A_{\Gamma_i}$, it follows that $C(x)$ is contained in $A_{\Gamma_1}$. Since $A_{\Gamma_1}$ is generated by the large-type dihedral parabolic subgroups corresponding to the edges of $\Gamma_1$, the result follows.
\end{proof}

\begin{lem}\label{lem:2d-outer}
	Let $A_\Gamma$ be a two-dimensional Artin group, and assume that $\Gamma$ contains an outer edge labelled $2$. Then $A_\Gamma$ does not satisfy property $(P_1)$.
\end{lem}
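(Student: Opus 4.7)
The plan is to exhibit an element $x \in A_\Gamma$ with $C(x) \cong \Z \times F_{\geq 2}$ but $C(x) \not\subseteq N$, where $N$ is the normal closure in $A_\Gamma$ of its non-abelian dihedral Artin subgroups. Let $a, b$ be the endpoints of an outer edge of $\Gamma$ labelled $2$, with $a$ of valence $1$ and $b$ of valence at least $2$; set $\Gamma_0 := \Gamma - \{a\}$. Since the only relation of $A_\Gamma$ involving $a$ is $ab = ba$, one has an amalgamated product decomposition
$$A_\Gamma \cong A_{\Gamma_0} *_{\langle b \rangle} \langle a, b \mid ab = ba \rangle,$$
whose Bass--Serre tree $T$ has cyclic edge stabilisers (all conjugate to $\langle b \rangle$). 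Taking $x := b$, Lemma~\ref{lem:centraliser-large} applied to $\overline{\Gamma}_b$---which contains the half-edge produced by cutting $ab$ together with at least one further edge coming from $\Gamma_0$---gives $C(b) \cong \Z \times F_{\geq 2}$, while $a \in C(b)$ by the relation $ab = ba$. It will therefore suffice to show that $a \notin N$.

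For this, I observe that the quotient $A_\Gamma / \langle\langle A_{\Gamma_0} \rangle\rangle$ is infinite cyclic, generated by the image of $a$: killing every standard generator other than $a$ reduces $ab = ba$ to a tautology. So it is enough to prove the key claim that every subgroup $H \leq A_\Gamma$ isomorphic to a non-abelian dihedral Artin group $A_m$ (with $m \geq 3$) is conjugate into $A_{\Gamma_0}$. This forces $N \subseteq \langle\langle A_{\Gamma_0} \rangle\rangle$, so the image of $a$ in $A_\Gamma / N$ remains non-trivial.

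The key claim will be established by studying the action of $H$ on $T$ through its non-trivial centre $\langle z \rangle = Z(H)$. If $z$ acts hyperbolically on $T$, then all of $H \subseteq C(z)$ preserves the axis of $z$ and therefore acts on a line; since the kernel of this action lies in the intersection of cyclic edge stabilisers, $H$ would be virtually cyclic, which is incompatible with $H \cong A_m$ for $m \geq 3$. Hence $z$ is elliptic. If $\Fix(z) \subseteq T$ reduces to a single vertex, then $H$ fixes it, and as $H$ is non-abelian it cannot lie in a $\Z^2$-type vertex stabiliser, so it must be conjugate into $A_{\Gamma_0}$ as required.

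The main obstacle is the remaining subcase, where $\Fix(z)$ contains an edge. Then $z$ lies in an edge stabiliser, so after conjugation $z = b^k$ for some $k \neq 0$, and $H \subseteq C(b^k)$. Here I invoke the fact, standard for two-dimensional Artin groups, that $C(b^k) = C(b)$ for $k \neq 0$ (which follows from the torsion-freeness of cell stabilisers in $D_\Gamma$ combined with the structure of the standard tree $T_b$), so that $H \hookrightarrow C(b) \cong \Z \times F$ with $F$ a finitely generated free group. To close the case I would show that no $A_m$ with $m \geq 3$ embeds in $\Z \times F$: projecting to the $F$-factor yields a kernel $K$ contained in $Z(A_m) \cong \Z$, so $K = \langle z_0^n \rangle$ for some $n \geq 0$; the case $n = 0$ would force $A_m \hookrightarrow F$, impossible since $A_m$ has non-trivial centre, and for $n \geq 1$ the quotient $A_m / \langle z_0^n \rangle$ would have to be free as a subgroup of $F$, contradicting a short abelianisation computation which shows that $(A_m / \langle z_0^n \rangle)^{ab}$ contains torsion for every $m \geq 3$, $n \geq 1$. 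This contradiction closes the final case and completes the proof.
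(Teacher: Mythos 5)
Your proposal is correct and follows the same overall skeleton as the paper's proof: pick the valence-$1$ endpoint $a$ and its neighbour $b$ of the outer edge labelled $2$, note $C(b)\cong\Z\times F_{\geq 2}$ and $a\in C(b)$, and show $a$ survives in the quotient of $A_\Gamma$ by $\langle\langle A_{\Gamma_0}\rangle\rangle$ (your infinite cyclic quotient is exactly the paper's retraction $f\colon A_\Gamma\to\langle a\rangle$), which reduces everything to showing that every non-abelian dihedral Artin subgroup lies in $\langle\langle A_{\Gamma_0}\rangle\rangle$. Where you genuinely diverge is in how you establish this last containment. The paper dispatches it in one line by citing the classification of dihedral Artin subgroups of two-dimensional Artin groups ([V-RigidityLarge, Theorem~D]), which says any such subgroup is conjugate into a standard parabolic avoiding $a$. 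You instead prove the weaker statement you actually need from scratch, via the Bass--Serre tree of the splitting $A_{\Gamma_0}*_{\langle b\rangle}\Z^2$ and an analysis of how the centre of a dihedral Artin subgroup can act on that tree, closing the last case with a clean abelianisation computation showing $A_m\not\hookrightarrow\Z\times F$ for $m\geq 3$. This buys self-containedness (no appeal to the dihedral-subgroup classification) at the cost of length, and it is a nice illustration that only the splitting over $\langle b\rangle$ is really needed.

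Two small points to tighten. First, in the subcase where the centre $z$ of $H$ acts hyperbolically on the Bass--Serre tree, the extension you obtain is (subgroup of $\Z$)-by-(subgroup of $D_\infty$), so the correct conclusion is that $H$ is virtually abelian, not virtually cyclic; this still contradicts $H\cong A_m$ since non-abelian dihedral Artin groups contain non-abelian free subgroups. Second, the identity $C(b^k)=C(b)$ for $k\neq 0$ is true but deserves a real citation rather than the sketched justification: it follows from $\Fix(b^k)=\Fix(b)=T_b$ ([HMS-Artin, Corollary~2.16], which the paper uses elsewhere) together with the fact that the setwise stabiliser of the standard tree $T_b$ is exactly $C(b)$ (as in [MP-Acylindrical]). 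Neither issue affects the validity of the argument.
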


\begin{proof}
	Let $e$ be an outer edge of $\Gamma$ labelled $2$ with vertices $x, y$, where $y$ has valence $1$ and $x$ has valence at least $2$. By Lemma~\ref{lem:centraliser-valence2}, the centraliser $C(x)$ is of the form $\Z\times F_{\geq 2}$.
 We will show that the element $y \in C(x)$ is not contained in the normal closure of $\mathcal{S}_{\mathrm{dih}}$, which will show that $A_\Gamma$ does not satisfy property $(P_1)$.
	
	Let $\Gamma'$ be the subgraph of $\Gamma$ induced by $\Gamma - \{y\}$. Consider a  subgroup $H$ of $A_\Gamma$ isomorphic to a non-abelian dihedral Artin group. It follows from \cite[Theorem~D]{V-RigidityLarge} that $H$ is  conjugated either to a subgroup of a standard dihedral parabolic subgroup on two generators $a, b$, with necessarily $m_{ab}\geq 3$, or  to a subgroup of a standard parabolic subgroup on three generators $a, b, c$ with $m_{ab}=m_{bc}=m_{ac}=3$. Either way, we get that $H$ belongs to the normal closure $\langle \langle A_{\Gamma'} \rangle \rangle$ of $A_{\Gamma'}$. Thus,
 it follows that $\langle \langle \mathcal{S}_{\mathrm{dih}} \rangle \rangle\subset \langle \langle A_{\Gamma'} \rangle \rangle$.
	
	Now, consider the homomorphism $f: A_\Gamma \rightarrow \langle y \rangle$ that sends $y$ to itself and every other standard generator  of $A_\Gamma$ to the identity element. This homomorphism is well-defined since $e$ is an outer edge with even label. Since every vertex of $\Gamma'$ belongs to $\ker(f)$, we get that $A_{\Gamma'}$, and thus $\langle \langle A_{\Gamma'} \rangle \rangle$ and $\langle \langle \mathcal{S}_{\mathrm{dih}} \rangle \rangle$, belong to the kernel of $f$. Since $y \notin \ker(f)$ by construction, it follows that the element $y \in C(x)$ is not contained in $\langle \langle \mathcal{S}_{\mathrm{dih}} \rangle \rangle$, which finishes the proof. 
\end{proof}

Since property $(P_1)$ is invariant under isomorphism, we get:

\begin{cor}\label{cor:outer}
	If $A_\Gamma$ is an Artin group such that $\Gamma$ contains an outer edge labelled~$2$, then $A_\Gamma$  is not isomorphic to a large-type Artin group. \qed.
\end{cor}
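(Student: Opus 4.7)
The plan is to argue by contradiction, playing Lemma~\ref{lem:large-outer} and Lemma~\ref{lem:2d-outer} off each other through the fact that property~$(P_1)$ is preserved under group isomorphism. More precisely, suppose for contradiction that $A_\Gamma$ is isomorphic to some large-type Artin group $A_{\Gamma'}$.

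The first step is to observe that property~$(P_1)$ is formulated entirely in algebraic terms: it refers only to centralisers of elements and to the normal closure of the collection of subgroups of the group that are abstractly isomorphic to non-abelian dihedral Artin groups. In particular, $(P_1)$ is preserved under any group isomorphism. By Lemma~\ref{lem:large-outer}, the large-type Artin group $A_{\Gamma'}$ satisfies~$(P_1)$, so transporting along the assumed isomorphism shows that $A_\Gamma$ satisfies~$(P_1)$ as well.

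The second step is to apply Lemma~\ref{lem:2d-outer}, which requires $A_\Gamma$ itself to be two-dimensional. This is automatic: every large-type Artin group is two-dimensional, since the three-generator spherical-type Artin groups are $A_3$, $B_3$ and $H_3$, each of which involves an edge of label~$2$, and such edges are forbidden when all labels of $\Gamma'$ are at least~$3$. Two-dimensionality is isomorphism-invariant (as noted in the introduction, it can be characterised as the class of Artin groups whose maximal abelian subgroups have rank at most~$2$), and therefore $A_\Gamma$ is also two-dimensional. Since by hypothesis $\Gamma$ contains an outer edge labelled~$2$, Lemma~\ref{lem:2d-outer} then says that $A_\Gamma$ does \emph{not} satisfy~$(P_1)$, contradicting the previous paragraph.

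There is no real obstacle once the two lemmas are in hand; the only delicate point is the verification that the isomorphism forces two-dimensionality of $A_\Gamma$, since Lemma~\ref{lem:2d-outer} is stated only in that setting. Everything else is a mechanical transport of an algebraic property across an isomorphism.
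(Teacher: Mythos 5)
Your proof is correct and follows essentially the same route as the paper, which simply observes that property $(P_1)$ is isomorphism-invariant and combines Lemma~\ref{lem:large-outer} with Lemma~\ref{lem:2d-outer}. You are right to flag that Lemma~\ref{lem:2d-outer} requires $A_\Gamma$ to be two-dimensional --- a hypothesis the corollary's statement omits and the paper only supplies later, in the proof of Theorem~A, via \cite[Theorem~E]{V-RigidityLarge} --- and your transport of two-dimensionality across the isomorphism closes that small gap cleanly.
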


\paragraph{The case of inner edges with label $2$.} We  introduce the differentiating property for presentation graphs that contain an inner edge labelled $2$.

\begin{defin}
	Let $A_\Gamma$ be a two-dimensional Artin group. We say that $A_\Gamma$ satisfies property $(P_2)$ if there exist two elements $x, y \in A_\Gamma$ such that $x, y$ generate a $\Z^2$-subgroup, and the centralisers $C(x)$ and $C(y)$ both are of the form $\Z \times F_{\geq 2}$.
\end{defin}

\begin{lem}\label{lem:2d-inner}
	Let $A_\Gamma$ be a two-dimensional Artin group, and assume that $\Gamma$ contains an inner edge labelled $2$. Then $A_\Gamma$ satisfies property $(P_2)$.
\end{lem}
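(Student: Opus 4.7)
The plan is to exhibit explicit elements $x, y$ satisfying property $(P_2)$ by taking them to be the two standard generators spanning the inner edge of label $2$.

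More precisely, let $e$ be an inner edge of $\Gamma$ labelled $2$, and let $x, y \in V(\Gamma)$ be its two endpoints. By the definition of an inner edge, both $x$ and $y$ have valence at least $2$ in $\Gamma$. I would verify the two conditions of property $(P_2)$ as follows.

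First, I would show that $x$ and $y$ generate a $\Z^2$-subgroup of $A_\Gamma$. Since the edge $e$ has label $m_{xy}=2$, the induced subgraph on $\{x,y\}$ gives, by van der Lek's theorem, a standard parabolic subgroup $A_{\{x,y\}} = \langle x, y \mid xy = yx \rangle$ which is isomorphic to $\Z^2$; in particular $x$ and $y$ commute and are of infinite order, so $\langle x, y\rangle \cong \Z^2$.

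Second, I would show that $C(x)$ and $C(y)$ are both of the form $\Z \times F_{\geq 2}$. Since $A_\Gamma$ is two-dimensional and $x$ (respectively $y$) is a standard generator of $A_\Gamma$ corresponding to a vertex of $\Gamma$ of valence at least $2$, Lemma~\ref{lem:centraliser-valence2} applies directly and yields the desired form for both $C(x)$ and $C(y)$. Combining the two conditions shows that $A_\Gamma$ satisfies property $(P_2)$. There is no substantial obstacle here, as the statement reduces to a direct application of van der Lek's theorem together with Lemma~\ref{lem:centraliser-valence2}, the definition of an inner edge ensuring precisely the valence hypothesis needed to invoke the latter.
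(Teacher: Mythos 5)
Your proposal is correct and follows essentially the same route as the paper: take the two endpoints of the inner edge labelled $2$, invoke van der Lek's theorem to see that they generate a $\Z^2$, and apply Lemma~\ref{lem:centraliser-valence2} (using the valence condition built into the definition of an inner edge) to get that both centralisers are of the form $\Z\times F_{\geq 2}$. No gaps.
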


\begin{proof}
	Let $x, y$ denote the vertices of an inner edge of $\Gamma$ of label $2$. Since that edge has label $2$, a standard result of van der Lek \cite{vdL} implies that the parabolic subgroup $\langle x, y\rangle$ is isomorphic to $\Z^2$. Since both $x$ and $y$ have valence at least $2$, it follows from Lemma~\ref{lem:centraliser-valence2} that the centralisers $C(x)$ and $C(y)$ are both of the form~$\Z \times F_{\geq 2}$.
\end{proof}

\begin{lem}\label{lem:large-inner}
	Let $A_\Gamma$ be a large-type Artin group. Then $A_\Gamma$ does not satisfy property $(P_2)$.
\end{lem}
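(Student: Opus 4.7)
My plan is to argue by contradiction: assume $A_\Gamma$ satisfies $(P_2)$, and reduce to an analysis inside a dihedral parabolic subgroup where the free-product structure of the central quotient yields the desired contradiction.

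Suppose $x, y \in A_\Gamma$ generate a copy of $\Z^2$ with both centralisers of the form $\Z\times F_{\geq 2}$. By Proposition~\ref{prop:char-generators}, up to conjugation I may assume $x = a^m$ for a standard generator $a$ that is neither isolated nor an even-labelled leaf, with $m\neq 0$; and $y$ is conjugate to $b^n$ for some standard generator $b$. Since $y$ commutes with $x$, it preserves the standard tree $T_a = \Fix(x)$; being elliptic on $D_\Gamma$ (as a conjugate of a standard generator power), a standard CAT(0) projection argument forces $y$ to fix a point of $T_a$, so $\Fix(y)\cap T_a \neq \emptyset$.

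Lemma~\ref{lem:tree-intersection} then gives a dichotomy. If $\Fix(y) = T_a$, then $y$ lies in the pointwise stabiliser of $T_a$, which is $\langle a\rangle$ (the intersection of dihedral parabolic subgroups containing $a$), and $\langle x, y\rangle$ would be cyclic --- a contradiction. Otherwise $\Fix(y)\cap T_a = \{v\}$ for a single vertex $v$, so $\langle x, y\rangle$ is contained in $\St(v)$, a dihedral parabolic subgroup. Conjugating by an element of $C(a)$ that fixes $x$, I may assume $\St(v) = A_{aw}$ for some standard generator $w$ adjacent to $a$ in $\Gamma$, with $m_{aw}\geq 3$.

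Within $A_{aw}$, the element $y$ lies in $C_{A_{aw}}(a^m) = \langle a, z_{aw}\rangle$ where $z_{aw}$ generates $Z(A_{aw})$; writing $y = a^s z_{aw}^t$, the fact that $\langle x, y\rangle$ is not cyclic forces $t\neq 0$. On the other hand, $y$ is conjugate in $A_\Gamma$ to a standard generator power, and since $\Fix(y)$ is a standard tree passing through $v$, hence an $A_{aw}$-translate of $T_a$ or $T_w$, the element $y$ is conjugate in $A_{aw}$ to $a^k$ or $w^k$ for some $k\neq 0$.

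The main obstacle is reconciling these two descriptions of $y$. My approach would be to pass to the quotient $A_{aw}/\langle z_{aw}\rangle$, which is a free product of two cyclic groups (as used in the proof of Lemma~\ref{LemmaCentraliserType2Elements}); there $y$ maps to $\bar a^s$. A conjugacy analysis in this free product --- using that distinct powers of $\bar a$ are non-conjugate; that $\bar a \sim \bar w$ when $m_{aw}$ is odd, so that $\bar a^s \sim \bar w^k$ forces $s = k$; and that $\bar a^s$ and $\bar w^k$ have incompatible cyclic syllable lengths when $m_{aw}$ is even, ruling out the $w^k$ subcase --- pins down $s = k$. Combined with the constraint from the abelianisation of $A_{aw}$ (which conjugation preserves), this forces $t = 0$, giving the required contradiction.
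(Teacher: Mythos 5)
Your argument is correct in outline and reaches the same case division (trees equal / disjoint / meeting in a vertex, via Proposition~\ref{prop:char-generators} and Lemma~\ref{lem:tree-intersection}), but it diverges from the paper in two substantive ways. First, by arguing by contradiction you have commutativity of $x$ and $y$ available from the start, and your CAT(0) projection argument shows directly that $\Fix(y)$ must meet $T_a$; the paper instead proves the unconditional statement that such $x,y$ never generate $\Z^2$, and so must dispose of the disjoint-trees case by invoking the Tits-alternative-type result \cite[Proposition~C]{M-TitsWise}. Your route is more elementary there. Second, in the crucial case where the trees meet in a single dihedral vertex, the paper simply cites \cite[Lemma~2.8]{HMS-Artin}, whereas you reprove the needed dihedral fact by hand via the central quotient $A_{aw}/Z(A_{aw})$ and a conjugacy/abelianisation analysis in the free product. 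I checked that this analysis goes through for both parities of $m_{aw}$ (in the odd case one needs, and one can verify from cyclically reduced forms, that $\bar a^s\sim\bar a^k$ forces $s=k$ and not merely $|s|=|k|$; in the even case the syllable-length and abelianisation constraints do the job), so your plan buys self-containedness at the cost of length.

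Two points deserve more care than your sketch gives them. The reduction ``$\St(v)=A_{aw}$ for some $w$ adjacent to $a$'' is not achievable by conjugating only within $C(a)$: the dihedral vertex $v$ of $T_a$ may have stabiliser based at a \emph{different} standard generator conjugate to $a$, so you should conjugate the whole pair $(x,y)$ and relabel. More importantly, the step ``$\Fix(y)$ is a standard tree through $v$, hence an $A_{aw}$-translate of $T_a$ or $T_w$, so $y$ is $A_{aw}$-conjugate to $a^k$ or $w^k$'' is exactly where a nontrivial input on parabolic subgroups is needed (that a conjugate of a standard generator lying in $A_{aw}$ is an $A_{aw}$-conjugate of $a^{\pm1}$ or $w^{\pm1}$, e.g.\ via \cite{CMV-Systolic} or \cite{HMS-Artin}); as written the ``hence'' hides essentially the same external content that the paper imports through its citation. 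With those two points patched, the proof is sound.
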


\begin{proof}
	Let $x, y \in A_\Gamma$ be two elements whose centralisers are of the form $\Z \times F_{\geq 2}$, and let us show that the $x, y$ do not generate a $\Z^2$-subgroup. 
	
	Since $A_\Gamma$ is of large type, it follows from Proposition~\ref{prop:char-generators} that $x$ and $y$ are conjugates of powers of standard generators. By \cite[Corollary~2.16]{HMS-Artin}, the fixed-point set of a standard generator of $A_\Gamma$ is equal to the fixed-point set of any of its non-trivial powers. Thus, there exist two standard trees $T, T'$ of the Deligne complex $D_\Gamma$ such that $\mbox{Fix}(x)= T$ and $\mbox{Fix}(y)= T'$. By Lemma~\ref{lem:tree-intersection}, there are three cases to consider, depending on whether $T, T'$ are equal, disjoint, or intersect along a single vertex. 
	
		\underline{Case 1}: If $T=T'$, then $x$ and $y$ both stabilise some common edge of $D_\Gamma$. Since edges of standard trees of $D_\Gamma$ have infinite cyclic stabilisers by construction, it follows that $\langle x, y \rangle$ is not isomorphic to $\Z^2$. 
		
		\underline{Case 2}:	If $T$ and $T'$ are disjoint, then it follows from \cite[Proposition~C]{M-TitsWise} that there exists an integer $n \geq 1$ such that $\langle x^n, y^n \rangle$ is a non-abelian free subgroup of $A_\Gamma$, and in particular $\langle x, y \rangle$ is not isomorphic to $\Z^2$. 
	
		\underline{Case 3}: If $T$ and $T'$ intersect along a single vertex corresponding to a coset of a standard parabolic subgroup on two generators $a, b$, then up to conjugation, we can assume that $x, y$ belong to a large-type dihedral Artin subgroup on two generators $a, b$, and that $x, y$ are conjugates of powers of either $a$ or $b$. It now follows from \cite[Lemma~2.8]{HMS-Artin} that $\langle x, y \rangle$ is not isomorphic to $\Z^2$. 
\end{proof}

Since property $(P_2)$ is invariant under isomorphism, we get:

\begin{cor}\label{cor:inner}
	If $A_\Gamma$ is a two-dimensional Artin group such that $\Gamma$ contains an inner edge labelled $2$, then $A_\Gamma$  is not isomorphic to a large-type Artin group \qed.
\end{cor}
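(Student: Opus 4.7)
The strategy is immediate: combine Lemma~\ref{lem:2d-inner} and Lemma~\ref{lem:large-inner} and exploit the fact that property $(P_2)$ is manifestly invariant under group isomorphism. More precisely, the plan is to argue by contradiction: assume $A_\Gamma$ is isomorphic to some large-type Artin group $A_{\Gamma'}$. Since $\Gamma$ contains an inner edge labelled~$2$ and $A_\Gamma$ is two-dimensional (note that the presence of an inner edge labelled~$2$ forces $\Gamma$ to have no standard triangle of spherical type involving that edge — but more directly, this is already part of the hypothesis, so we may simply apply the lemma), Lemma~\ref{lem:2d-inner} gives that $A_\Gamma$ satisfies $(P_2)$. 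Transporting this via the isomorphism yields that $A_{\Gamma'}$ also satisfies $(P_2)$, contradicting Lemma~\ref{lem:large-inner}.

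The only point that needs any justification at all is the invariance claim. This is clear from the definition: the condition of $(P_2)$ asserts the existence of two elements $x, y$ in the group that (i) generate a subgroup isomorphic to $\Z^2$, and (ii) each have centraliser isomorphic to $\Z \times F_{\geq 2}$. All three conditions — being elements of the group, the isomorphism type of the generated subgroup, and the isomorphism type of the centraliser — are preserved by any abstract group isomorphism, since centralisers are sent to centralisers and subgroup generation is preserved.

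There is therefore no real obstacle to overcome at this stage: all the technical content already resides in the two preceding lemmas. The harder half was Lemma~\ref{lem:large-inner}, which required the structural characterisation of elements with centraliser of the form $\Z \times F_{\geq 2}$ given by Proposition~\ref{prop:char-generators}, together with the trichotomy (equal/disjoint/single-vertex intersection) for standard trees from Lemma~\ref{lem:tree-intersection}. Given those inputs, the present corollary reduces to a one-line application of the invariance principle, exactly parallel to Corollaries~\ref{cor:isolated} and~\ref{cor:outer}.
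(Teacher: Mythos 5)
Your proposal is correct and follows exactly the paper's argument: apply Lemma~\ref{lem:2d-inner} to see that $A_\Gamma$ satisfies $(P_2)$, apply Lemma~\ref{lem:large-inner} to see that a large-type Artin group does not, and conclude by the isomorphism-invariance of $(P_2)$. The paper's own proof is precisely this one-line deduction, so there is nothing to add.
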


We are finally ready to prove Theorem~A. 

\begin{proof}[Proof of Theorem~A]
	Let $A_\Gamma$ be a large-type Artin group, and let $A_{\Gamma'}$ be an Artin group isomorphic to $A_\Gamma$. By \cite[Theorem~E]{V-RigidityLarge}, it follows that $A_{\Gamma'}$ is two-dimensional. By Corollaries~\ref{cor:isolated}, \ref{cor:outer}, and \ref{cor:inner}, it follows that $\Gamma'$ does not contain any edge labelled~$2$. Thus, $A_{\Gamma'}$ is also of large type.
\end{proof}

\begin{rem} As an immediate corollary, we obtain the following purely algebraic  characterisation  (i.e. independent of the presentation graph) of large-type Artin groups:
		An Artin group $A_\Gamma$ is of large type if and only if it is not cyclic, it does not contain a copy of $\Z^3$, it does not have an isolated $\Z^2$, it satisfies property $(P_1)$, and it does not satisfy property $(P_2)$. 
\end{rem}

\begin{bibdiv}
\begin{biblist}

\bib{Baudisch}{article}{
title={Subgroups of semifree groups},
author={Baudisch, A.},
journal={Acta Mathematica Hungarica},
volume={38},
number={1-4},
pages={19--28},
year={1981}}

\bib{RigidityArtinCoxeter}{article}{
  title={Rigidity of Coxeter groups and Artin groups},
  author={Brady, N.},
  author={McCammond, J.},
  author={M{\"u}hlherr, B.},
  author={Neumann, Wa. D.},
  journal={Geometriae Dedicata},
  volume={94},
  pages={91--109},
  year={2002}
}

\bib{BH}{book}{
	title={Metric spaces of non-positive curvature},
	author={Bridson, M. R.},
	author= {Haefliger, A.}
	volume={319},
	year={2013},
	publisher={Springer Science \& Business Media}
}

\bib{BS-centre}{article}{
	author = {Brieskorn, E.},
	author = {Saito, K.},
	title = {Artin-gruppen und Coxeter-gruppen},
        journal={Inventiones mathematicae},
        volume={17},
        pages={245--271},
        year={1972}}

\bib{CHR-equations}{article}{
	author = {Ciobanu, L.},
	author = {Holt, D.},
        author = {Rees, S.}
	title = {Equations in groups that are virtually direct             products},
	journal={Journal of Algebra},
        volume={545},
         pages={88--99},
        year={2020},
         publisher={Elsevier}}

\bib{C-ProblemsArtin}{article}{
  title={Problems related to Artin groups},
  author={Charney, R.},
  journal={American Institute of Mathematics},
  year={2016},
}

\bib{CD}{article}{
   author={Charney, R.},
   author={Davis, M. W.},
   title={The $K(\pi,1)$-problem for hyperplane complements associated to
   infinite reflection groups},
   journal={J. Amer. Math. Soc.},
   volume={8},
   date={1995},
   number={3},
   pages={597--627}}

\bib{CMV-Systolic}{article}{ 
	title={Parabolic subgroups of large-type Artin groups}, 
	author={Cumplido, M.},
	author={Martin, A.},
	author={Vaskou, N.},
	volume={174},  
	number={2}, 
	journal={Math. Proc.  Cambridge Phil. Soc.},  
	pages={393--414},
	year={2023}}

\bib{D-IsomorphismRAAGs}{article}{
  title={Isomorphisms of graph groups},
  author={Droms, C.},
  journal={Proceedings of the American Mathematical Society},
  volume={100},
  number={3},
  pages={407--408},
  year={1987}
}

\bib{HMS-Artin}{article}{
	author = {Hagen, M.},
	author = {Martin, A.},
	author = {Sisto, A.},
	title = {Extra-large type Artin groups are hierarchically hyperbolic},
	journal = {Math. Annalen},
	pages = {in press},
	year = {2022}}

\bib{vdL}{article}{
   author={van der Lek, H.},
   title={The homotopy type of complex hyperplane complements},
   status={Ph.D. thesis},
   date={1983},
   journal={University of Nijmegan}}

\bib{M-TitsWise}{article}{
AUTHOR = {Martin, A.},
     TITLE = {The Tits Alternative for two-dimensional Artin groups and Wise's Power Alternative},
   eprint = {arXiv:2210.06369},
   YEAR = {2022}}

   \bib{MP-Acylindrical}{article}{
    AUTHOR = {Martin, A.},
    AUTHOR = {Przytycki, P.},
     TITLE = {Acylindrical actions for two-dimensional Artin groups of hyperbolic type},
  journal={International Mathematics Research Notices},
  volume={2022},
  number={17},
  pages={13099--13127},
  year={2022}}

\bib{MP-Abelian}{article}{
	author = {Martin, A.},
	author = {Przytycki, P.},
	title = {Abelian subgroups of two-dimensional Artin groups},
	journal = {Bull. London Math. Soc.},
	volume = {53},
	number = {5},
	pages = {1338-1350},
	year = {2021}}

\bib{M-IsomorphismCoxeterSurvey}{article}{,
  title={The isomorphism problem for Coxeter groups},
  author={M{\"u}hlherr, B.},
  journal={The Coxeter Legacy: Reflections and Projections},
  pages={1--15},
  year={2006},
}

\bib{P-IsomorphismSpherical}{article}{
title = {Artin groups of spherical type up to isomorphism},
journal = {Journal of Algebra},
volume = {281},
number = {2},
pages = {666-678},
year = {2004},
author = {Paris, L.}
}

\bib{V-RigidityLarge}{article}{
	AUTHOR = {Vaskou, N.},
	TITLE = {The Isomorphism Problem for large-type Artin groups},
	eprint = {arXiv:2201.08329v3},
	YEAR = {2023}}

\end{biblist}
\end{bibdiv}

\vspace{0.5cm}

\bigskip\noindent
\textbf{Alexandre Martin},

\noindent Address: Department of Mathematics and the Maxwell Institute for the Mathematical Sciences, Heriot-Watt University, Edinburgh EH14 4AS, UK.

\noindent Email: \texttt{alexandre.martin@hw.ac.uk}

\bigskip\noindent
\textbf{Nicolas Vaskou},

\noindent Address: School of Mathematics, University of Bristol, Woodland Road, Bristol BS8 1UG, UK.

\noindent Email: \texttt{nicolas.vaskou@gmail.com}
\end{document}